\newtheorem{theorem}{Theorem}[section]
\newtheorem{lemma}[theorem]{Lemma}
\newtheorem{remark}[theorem]{Remark}
\newcommand{\norm}[2]{\|{#1}\|_{{#2}}}
\newcommand{\tnorm}[1]{\|\kern-.4mm| {#1} |\kern-.4mm\|}
\newcommand{\ud}{\,{\rm d}}
\newcommand{\jump}[1]{[\kern-0.6mm [{#1}]\kern-0.6mm]}
\newcommand{\av}[1]{\{\kern-1.5mm \{{#1}\}\kern-1.5mm\}}
\newcommand{\mbf}[1]{\bm{{#1}}}
\newcommand{\man}[1]{{\color{black}{#1}}} 
\newcommand{\change}[1]{{#1}}
\author{Emmanuil H.~Georgoulis \thanks{School of Mathematics and Actuarial Science, University of Leicester, Leicester LE1 7RH, United Kingdom. Email: {\tt{Emmanuil.Georgoulis@le.ac.uk}}} \thanks{Department of Mathematics, School of Applied Mathematical and Physical Sciences, National Technical University of Athens, Zografou 15780, Greece
} \thanks{Institute of Applied and Computational Mathematics, Foundation of Research and Technology -- Hellas, Vassilika Vouton 700 13 Heraklion,  Crete, Greece
} }
\title{Hypocoercivity-compatible finite element methods \\ for the long-time computation of Kolmogorov's equation}
\begin{document}
	\maketitle
	
	\begin{abstract}
		This work is concerned with the development of a family of Galerkin finite element methods for the classical Kolmogorov equation. Kolmogorov's equation serves as a sufficiently rich, for our purposes, model problem for kinetic-type equations and is characterised by diffusion in \emph{one} of the two (or three) spatial directions only. Nonetheless, its solution admits typically decay properties to some long time equilibrium, depending  on closure by suitable boundary/decay-at-infinity conditions. A key attribute of the proposed family of methods is that they also admit similar decay properties at the (semi)discrete level for very general families of triangulations. The method construction uses ideas by the general theory of \emph{hypocoercivity} developed by Villani \cite{villani}, along with judicious choice of numerical flux functions.  These developments turn out to be sufficient to imply that the proposed finite element methods admit a priori error bounds with constants \emph{independent of the final time}, despite Kolmogorov equation's degenerate diffusion nature. Thus, the new methods provably allow for robust error analysis for final times tending to infinity. The extension to three spatial dimensions is also briefly discussed. 
		\end{abstract}
	
	\section{Introduction}
 Degenerate parabolic problems in kinetic modelling are often characterised by the explicit presence of diffusion/dissipation in \emph{some} of the spatial directions only, yet may still admit decay properties to some long time equilibrium. The development of Galerkin finite element methods for such problems satisfying provably also such decay properties is, remarkably, an unexplored area. This is despite the potential attraction  that general, possibly highly non-uniform or adaptive, grid approximations can bring to kinetic simulations, especially those concerning expansive long-time simulation phenomena. Aiming to contribute in this direction, we shall develop and analyse a new class of Galerkin finite element methods for a simple kinetic equation, equipped with suitable closures by known boundary/periodicity conditions from the literature. To this end, we shall be concerned with the classical Kolmogorov partial differential equation (PDE)
	\begin{equation}\label{eq:kolmogorov}
	\mathcal{L}u\equiv u_t-u_{xx}+xu_y = f,\quad \text{in }(0,\infty)\times \man{\Omega},
	\end{equation}
\man{with $\Omega\subset\mathbb{R}^2$,} for suitably smooth forcing $f\in L_2(\mathbb{R}_+;H^1(\Omega))$, and Cauchy initial data $u(0,\cdot)=u_0\in L_2(\Omega)$. The notational conventions used above are somewhat non-standard in kinetic theory, whereby $\mathcal{L}$ may be written as $\mathcal{L}f:=f_t-f_{vv}+vf_x$ with $v$ denoting the particle velocity variable, $x$ the displacement/position and $f$ the respective probability density function. 
% In Section \ref{sec:extension}, we shall also briefly discuss the extension of the developments in this work to the three-dimensional version of \eqref{eq:kolmogorov}, namely
%	\begin{equation}\label{eq:kolmogorov_3D}
%\mathcal{L}_3u\equiv u_t-u_{xx}+xu_y +yu_z= f,\quad \text{in }(0,\infty)\times \man{\Omega,}
%\end{equation}
%\man{with $\Omega\subset\mathbb{R}^3$. Equation \eqref{eq:kolmogorov_3D} is a strongly ultraparabolic equation arising, e.g., as the adjoint of the backward Kolmogorov equation integrating the stochastic system $\ud z=y\ud t$, $\ud y=x\ud t$, $\ud x=\ud W$ with $W$ denoting the one-dimensional Brownian motion; \eqref{eq:kolmogorov_3D} can model for instance particles subject to random external forcing}.  
Kolmogorov showed already in 1934 that \eqref{eq:kolmogorov} admits a smooth fundamental solution outside the pole and, hence, it is hypoelliptic, i.e., it admits a smooth solution in $(0,\infty)\times \mathbb{R}^2$ for any smooth initial and forcing data $u_0,f$, respectively \cite{kolmogorov}. This may be a surprising assertion at first sight, since there is no explicit dissipation built into the PDE in the $y$-variable. The advection field $(0,x)^T$, however, is ``appropriately non-constant'' so that it suffices to propagate the built-in dissipation in the $x$-variable onto the entire spatial domain.  In 1967, H\"ormander, in the celebrated work \cite{hormander}, gave a sufficient condition for hypoellipticity for a wide class of 2nd order operators with non-negative characteristic form, which are nowadays often referred to as \emph{H\"ormander sum-of-squares operators}; interestingly, a motivating example for the developments in \cite{hormander} has, indeed, been Kolmogorov's equation \eqref{eq:kolmogorov}.  Hence, although \eqref{eq:kolmogorov} may appear to be a rather special equation at first sight, its significance as a model problem is paramount, for it encompasses a number of pertinent structural properties of large classes of kinetic models; we refer, e.g., to \cite{villani} for very instructive expositions.

The proof of trend to equilibrium for the directly related inhomogeneous Fokker-Planck equation has been given by H\'erau \& Nier in  \cite{herau_nier} upon realising that certain entropies admitting mixed derivatives ($u_{xy}$ or, respectively, $f_{vx}$) give rise to full gradients in certain weighted Sobolev spaces, from which Poincar\'e-Friedrichs inequalities (also known as spectral gap properties in this context) are sufficient to prove decay to an equilibrium distribution. Related ideas have also been used by Eckmann \& Hairer in \cite{eckmann_hairer} to study the spectral properties of certain hypoelliptic operators involving degenerate diffusions and by Mouhot \& Neumann \cite{mouhot_neumann} in the study of kinetic models with integral-type collision operators, among others. The idea of using entropies involving mixed derivatives was elevated to a general framework in proving decay to equilibrium for kinetic equations by Villani \cite{villani} via the introduction of the concept of \emph{hypocoercivity}. Roughly speaking, hypocoercivity is the property of certain degenerate elliptic or parabolic differential operators to yield dissipation of the solution also in the directions where \emph{no} diffusion is explicitly present. Astonishingly, sufficient conditions for hypocoercivity are given by the rank spanned by H\"ormander vector fields and their commutators with respect to the vector field related to 1st order term \cite{villani}. Thus, sufficient conditions for hypocoercivity are closely related, but \emph{not} identical, to the classical H\"ormander's rank condition for hypoellipticity.

To fix ideas,  denoting by $(\cdot,\cdot)_{L_2}$ the inner product of $L_2$, \change{over some $\Omega\subset \mathbb{R}^2$, and respective norm $\norm{\cdot}{L_2}:=\sqrt{(\cdot,\cdot)}_{L_2}$ and, assuming suitable boundary conditions}, the classical energy-type analysis for \eqref{eq:kolmogorov} yields
\begin{equation}\label{eq:energy_classical}
\frac{1}{2}\frac{\ud}{\ud t}\norm{u(t)}{\change{L_2}}^2+\norm{u_x}{\change{L_2}}^2= (f,u)_{\change{L_2}},
\end{equation}
for almost all $t\in(0,t_f]$, for some final time $t_f\in\mathbb{R}_+$, upon observing that the skew-symmetric term $(xu_y,u)$ vanishes. A combination of Cauchy-Schwarz' and Young's inequalities on the last term on the right-hand side \change{of} \eqref{eq:energy_classical} \man{thus} yields
\begin{equation}
\int_0^{t_f}(f,u)_{\change{L_2}}\ud t\le \frac{1}{2\delta}\norm{f}{L_2(0,t_f;\change{L_2})}^2+ \frac{\delta}{2}\norm{u}{L_2(0,t_f;\change{L_2})}^2 ,
\end{equation}
for any $\delta>0$. Therefore, Gr\"onwall's Lemma implies
\begin{equation}\label{eq:basic_energy_estimate}
\norm{u(t_f)}{\change{L_2}}^2\le e^{\delta t_f} \big(\delta^{-1}\norm{f}{L_2(0,t_{\change{f}};\change{L_2})}^2+\norm{u_0}{\change{L_2}}^2\big).
\end{equation}
Since the right-hand side of \eqref{eq:basic_energy_estimate} is always greater than or equal to $
  \norm{u_0}{\change{L_2}}^2
$
for every $f\in L_2(0,\infty;\change{L_2})$, we conclude that the basic energy estimate may be severely pessimistic as $t_f\to \infty$ for any fixed $\delta>0$: for any finite final time $t_f>0$, the above implies that the right-hand side of \eqref{eq:basic_energy_estimate} will be positive away from zero for any non-trivial initial condition. Notice that when $f=0$, we can take $\delta\to 0$, yielding the stability estimate $\norm{u(t_f)}{\change{L_2}}\le \norm{u_0}{\change{L_2}}$. This is at odds with the fast decay to $0$ observed for this problem by the solution of Kolmogorov's equation \change{when equipped with suitable conditions, e.g., periodic boundary conditions or decay-to-infinity conditions when $\Omega=\mathbb{R}^2$}. Evidently the key culprit in the above analysis is the absence of control in a time-integrated norm on the left-hand side of \eqref{eq:basic_energy_estimate}. The situation does not appear to improve if different versions of Gr\"onwall-type inequalities are applied instead. 

On the other hand, for the non-degenerate parabolic equation
\begin{equation}
	\tilde{\mathcal{L}}u\equiv u_t-u_{xx}-u_{yy}+xu_y = f,\quad \text{in }(0,\infty)\times \man{\Omega},
\end{equation}
with the same forcing and initial conditions, we can easily deduce the energy identity
\begin{equation}\label{eq:energy_classical_tilde}
\frac{1}{2}\frac{\ud}{\ud t}\norm{u(t)}{\change{L_2}}^2+\norm{\nabla u}{\change{L_2}}^2= (f,u)_{\change{L_2}}.
\end{equation}
Assuming now the validity of a Poincar\'e-Friedrichs inequality of the form $\norm{u}{\change{L_2}}^2\le C_{PF}\norm{\nabla u}{\change{L_2}}^2$, and working as before, we can arrive at the estimate
\begin{equation}\label{eq:energy_classical_tilde_two}
\frac{1}{2}\frac{\ud}{\ud t}\norm{u(t)}{\change{L_2}}^2+C_{PF}^{-1}\norm{ u}{\change{L_2}}^2\le  (f,u)_{\change{L_2}},
\end{equation}
from which, Gr\"onwall's Lemma implies
\begin{equation}\label{eq:basic_energy_estimate_nondeg}
\norm{u(t_f)}{\change{L_2}}^2 \le {\rm e}^{-C_{PF}^{-1}t_f} \norm{u_0}{\change{L_2}}^2+C_{PF}\int_0^ {t_f}{\rm e}^{-C_{PF}^{-1}(t_f-t)} \norm{ f(t)}{\change{L_2}}^2\ud t,
\end{equation}
whose right-hand side clearly decays exponentially as $t_f\to\infty$ when $f=0$, for instance. This is in sharp contrast with \eqref{eq:basic_energy_estimate}.

\change{A key idea in the context of hypocoercivity, on the other hand, is to arrive at a modified energy setting whereby Poincar\'e-Friedrichs inequality is valid for the \emph{original} problem \eqref{eq:kolmogorov}. In our specific setting of Kolmogorov's equation, upon considering carefully constructed families of Hilbertian norms $\sqrt{(\cdot,\cdot)_{\rm hc}}=:\norm{\cdot}{\rm hc}:=\big(\norm{\cdot}{L_2}^2+\norm{\sqrt{A}\nabla\cdot}{L_2}^2\big)^{1/2}$, where $A\in \mathbb{R}^{2\times2}_{\rm symm}$ are suitable positive semidefinite  matrices, it is possible to arrive at an energy estimate of the form
	\begin{equation}\label{eq:energy_mod}
	\frac{1}{2}\frac{\ud}{\ud t}\norm{u(t)}{\rm hc}^2+c\big(\norm{\nabla u}{L_2}^2+\norm{u_x}{\rm hc}^2)\le (f,u)_{\rm hc},
	\end{equation}
	for suitable/compatible boundary conditions, with $c$ a positive constant. Crucially, if $u$ is such that it satisfies a Poincar\'e-Friedrichs inequality, we can arrive to an estimate of the form \eqref{eq:basic_energy_estimate_nondeg} for the original, degenerate Kolmogorov's equations. As a result, in the homogeneous case $f=0$ \eqref{eq:basic_energy_estimate_nondeg} implies exponential decay as $t_f\to\infty$.}

\change{Obviously, \eqref{eq:basic_energy_estimate}} is not a satisfactory state of affairs not only at the PDE level, but also under the prism of the numerical analysis for these problems. Indeed, almost all provably convergent, general numerical methods which can be potentially applied to this class of PDEs use standard energy-type arguments \change{as in \eqref{eq:basic_energy_estimate}} for their error analysis, resulting to possibly severely pessimistic a priori and/or a posteriori error bounds for large final times $t_f$. \man{Available} Galerkin-type numerical methods for (degenerate) second order evolution problems of diffusion type \man{typically also suffer in this respect}; we non-exhaustively refer to \cite{act,hss,eg,cdgh_book} as representative works presenting different approaches. \change{A numerical approach accommodating instead the functional setting of \eqref{eq:energy_mod}, thereby taking advantage of hypocoercivity properties is, therefore, desirable.}

To the best of our knowledge, there exist only few, yet quite inspiring, exceptions which employ hypocoercivity ideas in the context of numerical methods. In particular, the work of Porretta \& Zuazua \cite{porretta_zuazua} proves the hypocoercivity of classical finite difference operators discretising Kolmogorov's equation for $f=0$ on uniform spatial grids over $[0,\infty)\times\mathbb{R}^2$. The recent manuscript by Dujardin, H\'erau, \& Lafitte \cite{dujardin_herau_lafitte} takes the approach of \cite{porretta_zuazua} further by establishing discrete versions of weighted Poincar\'e inequalities for difference operators, thereby showing decay to equilibrium for finite difference approximations of the inhomogeneous two-dimensional Fokker-Planck equation, which is, of course, closely related to Kolmogorov's equation. Both these approaches apply directly the abstract semigroup result of Villani \cite[Theorem 18]{villani} to the finite difference operators used by proving the necessary commutator properties required for its validity. With regard to practical \man{works}, Foster, Loh\'eac \& Tran \cite{foster_loheac_tran} discuss the development of a Lagrangian-type splitting method based on a carefully constructed similarity transformation and linear finite elements over quasiuniform meshes, although no proof of decay to equilibrium is given for the numerical method itself. Also, Bessemoulin-Chatard \& Filbet \cite{besse_filbet} present design principles for the construction of equilibrium-preserving finite volume methods for nonlinear degenerate parabolic problems. \change{Recently, Bessemoulin-Chatard, Herda \& Rey \cite{besse_FV} presented an asymptotic-preserving in the diffusive limit  finite volume scheme for a class of one-dimensional kinetic equations, using the hypocoercivity framework of Dolbeault, Mouhot \& Schmeiser \cite{DMS_TAMS}.} 

For practical computations, one is typically forced to confine the spatial computational domain into an open and bounded one, say $\Omega\subset \mathbb{R}^d$, $d=2,3$. If the objective is to compute equilibrium states for the whole of $\mathbb{R}^d$,  the truncation of the computational domain and the imposition of boundary conditions should be performed carefully, for it may alter the rate of decay to equilibrium compared to the non-confined problem \cite{foster_loheac_tran}. If one is interested instead in computations on bounded domains as such, then the modelling often dictates the imposition of additional boundary conditions/constraints, such as \change{solution} periodicity or far field decay. \change{Indeed, as hypocoercivity is manifested via certain properties of commutators of directional derivatives appearing in the differential operator, integration by parts is an essential tool. As a result when confined on bounded domains the very nature of boundary conditions enforced becomes essential.} As we shall see below, the imposition of boundary conditions in a fashion allowing concepts of hypocoercivity to be ported to bounded spatial domains is a challenge both in the PDE and the numerical analysis contexts. \change{Indeed, to the best of our knowledge \emph{no} exhaustive study of hypocoercivity-inducing boundary conditions for problems posed on bounded spatial domains has been carried out in the respective PDE literature.}

This work is concerned with the development of a Galerkin-type finite element method which is compatible with hypocoercivity concepts and, therefore, allows for the proof of error bounds whose error constants \emph{do not} grow as the final time $t_f\to\infty$. \man{This is in sharp contrast to estimates derived via standard energy arguments as discussed above; \change{cf}.~\eqref{eq:basic_energy_estimate}. In particular,} as we shall show below, there will be \emph{no dependence} of the error constant on the final time $t_f$ \change{in the setting of \eqref{eq:energy_mod}}. This, in turn, will \change{highlight theoretically any} potential advantages of the numerical methods presented below for long time computations.  The construction of the finite element method is based on a variational interpretation of the modified entropies involving mixed derivatives of H\'erau \& Nier \cite{herau_nier} and Villani \cite{villani}, which are able to reveal the implicit diffusion properties in the $y$-variable also. The derived modified variational problem is based on the PDE and higher order laws stemming from the PDE and involving up to 4th order partial derivatives, when viewed formally in strong form. The finite element method is constructed via a Galerkin projection onto a suitable $C^0$-finite element space subordinate to a\man{, possibly, highly} non-uniform triangulation. The proposed method is, nevertheless, non-conforming due to the 4th order symbol of the variational problem. To recover, therefore, the consistency of the finite element method without compromising on the hypocoercivity properties, the variational form has to be enriched by carefully constructed consistent numerical fluxes over the skeleton of the triangulation. The proposed finite element method is proven to decay to equilibrium when $f=0$, when we equip the PDE with additional natural boundary conditions; this is, to the best of our knowledge, the first manifestation of hypocoercivity for a Galerkin scheme in the literature. Moreover, we prove a priori error bounds in the natural norm determined by the hypocoercivity result.

We view the developments presented below as proof of concept that Galerkin finite element methods over general non-uniform grids can be constructed in a compatible fashion to retain hypocoercivity properties at discrete level. This, in turn, leads to a priori error bounds which are \emph{robust} with respect to their dependence on the final time $t_f$. Kolmogorov's equation has the role of a sufficiently rich model problem to highlight a new finite element methodology for hypocoercive PDEs on bounded computational domains with view to retaining the long-time behaviour of the exact initial/boundary value problems. Apart from the mathematical challenge, the development of numerical methods for kinetic equations on general, possibly non-quasi-uniform meshes has important practical ramifications. Indeed, modern applications of kinetic equations involve the movement of large particles for which Lagrangian schemes appear to be preferential by practitioners which greatly benefits from the ability to discretise over unstructured, possibly moving, meshes.

The remainder of this work is structured as follows. In Section \ref{sec:weak_form}, we introduce a variational formulation motivated by design concepts of hypocoercivity \cite{villani}, which is shown to be coercive with respect to an $H^1$-equivalent norm in Section \ref{sec:hypo}. A discrete bilinear form using continuous, yet non-conforming, elements with appropriately constructed numerical fluxes ensuring the coercivity in the discrete setting also is proposed in Section \ref{sec:fem} and is shown to admit completely analogous decay to equilibrium properties as the respective continuous problem in the non-external forcing scenario. The error analysis of the proposed finite element method is given in Section \ref{sec:error_analysis} along with a discussion on its properties. \change{Finally, we present a series of numerical experiments highlighting the predicted convergence rates by the theory; this is the content of Section \ref{sec:num}.}
%To highlight the potential of the proposed framework for the class of kinetic equations with degenerate diffusion, we briefly discuss the extension of the developments in the first five sections to the case of \man{a} three dimensional (in space) version of Kolmogorov's equation; this is the content of Section \ref{sec:extension}.

To simplify notation, we shall abbreviate the $L_2(\omega)$-inner product and $L_2(\omega)$-norm for a Lebesgue-measurable subset $\omega\subset \mathbb{R}^d$ as $(\cdot,\cdot)_{\omega}$ and $\norm{\cdot}{\omega}$, respectively. Moreover, when $\omega=\Omega$, we shall further compress the notation to $(\cdot,\cdot)\equiv (\cdot,\cdot)_{\Omega}$ and $\norm{\cdot}{}\equiv \norm{\cdot}{\Omega}$. We shall also make use of the standard notation $H^k(\omega)$ for Hilbertian Sobolev spaces, $k\in\mathbb{R}$.

\section{A special weak formulation}\label{sec:weak_form}

The problem of closing a degenerate parabolic PDE with suitable boundary conditions is well understood via the classical theory of linear second order equations with non-negative characteristic form \cite{fichera,or73}. In particular, with $\mathbf{n}(\cdot):=(n_1(\cdot),n_2(\cdot))^T$ denoting the unit outward normal vector at almost every point of $\partial\Omega$, which is assumed to be piecewise smooth and Lipschitz, we first define the elliptic portion of the boundary
\[
\partial_0\Omega:=\{(x,y)\in \partial \Omega: n_1(x,y)\ne 0\} {\color{blue}.}
\]
On the non-elliptic   portion of the boundary $\partial \Omega\backslash\partial \Omega_0$, we define the inflow and outflow boundaries: 
\[
\partial_-\Omega:= \{(x,y)\in \partial\Omega\backslash\partial_0\Omega: xn_2(x,y)<0\}, \quad \partial_+\Omega:= \{(x,y)\in  \partial\Omega\backslash\partial_0\Omega: xn_2(x,y)   \ge  0\}.
\]
(For notational brevity, we have included any characteristic portions  $\{(x,y)\in  \partial\Omega\backslash\partial_0\Omega: xn_2(x,y)  = 0\}$ of the boundary  into the outflow part, since their treatment is identical in what follows.)
Introducing the notation $ Lu\equiv -u_{xx}+xu_y$, we consider the initial/boundary-value problem:
	\begin{equation}\label{eq:kolmogorov-bounded}
	\begin{aligned}
u_t+Lu= u_t -u_{xx}+xu_y =&\ f, &\text{in } (0,t_f]\times\Omega,\\
u=&\ u_0, &\text{on } \{0\}\times\Omega,\\
u=&\ 0, &\text{on } (0,t_f]\times\partial_{-,0}\Omega,
\end{aligned}
\end{equation}
with $\partial_{-,0}\Omega:=\partial_{-}\Omega\cup\partial_{0}\Omega$, for $t_f>0$ and for $f\in H^1(\Omega)$,  noting the non-standard regularity assumption for $f$. The well-posedness of the above problem is assured upon assuming that $\partial_{-,0}\Omega$ has positive one-dimensional Hausdorff measure \cite{or73}. It is also possible to impose Neumann-type boundary conditions on parts of $\partial_0\Omega$; this is not done here in the interest of simplicity of the presentation only. For a  three-dimensional counterpart of the above model problem we refer to Remark \ref{remark_3d}.

Let $A:=\left(\begin{array}{cc} \alpha & \beta\\ \beta & \gamma \end{array}\right) 
$ be
a symmetric and non-negative definite matrix with $\alpha,\beta,\gamma$ non-negative parameters, and set
\begin{equation}\label{eq:bigA}
\delta:=1-\frac{\beta}{\sqrt{ \alpha\gamma}};
\end{equation} 
the fact that $A$ is non-negative definite implies that $0\le\delta\le 1$. Assuming sufficient regularity for the exact solution $u$, so that the following calculations are well defined, \eqref{eq:kolmogorov} implies
$
\nabla u_t + \nabla L u = \nabla f,
$
which, tested against $A\nabla v$ for any $v\in H^1(\Omega)$, results in
\[
(\nabla u_t,A\nabla v) + (\nabla L u, A\nabla v)= (\nabla f,A\nabla v);
\]
here and below, we denote $(\cdot,\cdot)\equiv (\cdot,\cdot)_{L_2(\Omega)}$ and $\|\cdot\|\equiv\|\cdot\|_{L_2(\Omega)}$ for brevity. Resorting to \eqref{eq:kolmogorov} once more, after an integration by parts, we deduce the variational form
\begin{equation}\label{eq:weak_hypo}
(u_t,v)+(\nabla u_t,A\nabla v) + (u_x,v_x)+(xu_y,v)+(\nabla L u, A\nabla v)= (f,v)+(\nabla f,A\nabla v),
\end{equation}
for all $v\in H^1_{{-,0}}(\Omega):=\{v\in H^1(\Omega):v|_{\partial_{-,0}\Omega}=0\}$. The latter will be the basis of constructing a finite element method with the sought-after properties. To highlight the idea behind this construction in the context of hypocoercivity, we shall now discuss the properties of the spatial part of the operator on the left-hand side of \eqref{eq:weak_hypo}, viz., 
\begin{equation}\label{eq:spatial_part}
a(u,v):=(u_x,v_x)+(xu_y,v)+(\nabla L u, A\nabla v).
\end{equation}

\section{The hypocoercivity of $a(\cdot,\cdot)$}\label{sec:hypo}
We begin by somewhat modifying the approach presented in Villani \cite{villani} arriving, nevertheless, to an effectively similar result. For accessibility, we shall first revert to \eqref{eq:kolmogorov}, rather than \eqref{eq:kolmogorov-bounded}, i.e., we have $\Omega=\mathbb{R}^2$, assuming that $|u(x,y)|\to 0$ as $|(x,y)|\to\infty$; the case $\Omega\subset\mathbb{R}^2$ will be discussed below. %When the domain is clear by the context, we shall drop the subscript in the $L_2(\Omega)$-norm, viz., $\norm{\cdot}{L_2(\Omega)}\equiv \norm{\cdot}{}$, for brevity. 

Setting $v=u$ into \eqref{eq:spatial_part} gives
\[
a(u,u)=\|u_x\|^2+(xu_y,u)+(-u_{xxx}+(xu_y)_x,\alpha u_x+\beta u_y) + (-u_{xxy}+xu_{yy},\beta u_x+\gamma u_y),
\]
which, upon integration by parts, already yields
\[
\begin{aligned}
a(u,u)=&\ \|u_x\|^2+(u_{xx},\alpha u_{xx}+\beta u_{yx}) +((xu_y)_x,\alpha u_{x}+\beta u_{y}) \\
&+ (u_{xy},\beta u_{xx}+\gamma u_{yx})-(xu_{y},\beta u_{xy}+\gamma u_{yy})\\
=&\ \|u_x\|^2+\alpha\|u_{xx}\|^2+\beta\|u_y\|^2+\gamma\|u_{xy}\|^2+\alpha(u_y, u_x)+ 2\beta(u_{xy}, u_{xx}),
\end{aligned}
\]
noting that  $(xu_{xy}, u_{x})=0= (xu_{y}, u_{yy})$ from \change{the} divergence theorem and the decay properties of $u$ as $|(x,y)|\to \infty$. Young's and Cauchy-Schwarz' inequalities now imply
\[
a(u,u)\ge (1-\epsilon)\|u_x\|^2+\big(\alpha-\sqrt{\beta}\big)\|u_{xx}\|^2+\big(\beta-\frac{\alpha^2}{4\epsilon}\big)\|u_y\|^2+\big(\gamma-\beta^{\frac{3}{2}}\big)\|u_{xy}\|^2.
\]
Selecting $\alpha>0$, $\beta=4\alpha^2/9$, $\gamma=\alpha^3/3$, and $\epsilon=3/4$, for instance, we deduce
\begin{equation}\label{eq:hypoco}
a(u,u)\ge \frac{1}{4}\|u_x\|^2+\frac{\alpha}{3} \|u_{xx}\|^2+\frac{\alpha^2}{9}\|u_y\|^2+\frac{\alpha ^3}{27}\|u_{xy}\|^2.
\end{equation}
Another instructive choice is $\alpha>0$, $\beta=\alpha^2$, $\gamma=\alpha^3$ and $\epsilon=1/2$, giving
\begin{equation}\label{eq:hypoco2}
2a(u,u)\ge \|u_x\|^2+\alpha^2\|u_y\|^2.
\end{equation}
Thus, remarkably, using the non-standard test function $v-\nabla\cdot (A\nabla v)$ for well-chosen $A$, results into $L$ providing coercivity with respect to $u_y$ also! This is a manifestation of the concept of hypocoercivity. As briefly discussed above, the mechanics of this idea can be traced back to the celebrated work of H\"ormander \cite{hormander} for sufficient conditions for hypoellipticity for second order linear operators: the first order term is ``appropriately non-constant'' with respect to the independent variables $t,x,y$, allowing for control also in the commutator vector field $[\partial_x,x\partial_y]=\partial_y$, \change{using the usual notation $[A,B]:=AB-BA$ for a commutator of an algebraic structure}. Informally speaking, the first order vector field in conjunction with the direction of $\partial_x$ is able to ``propagate'' the $x$-variable dissipation of $L$ to the whole of $\mathbb{R}^2$, resulting to the property of hypocoercivity \cite{villani}. Notice that the choice of $\alpha,\beta,\gamma$ yielding \eqref{eq:hypoco} corresponds to $v-\nabla\cdot (A\nabla v)$ being an elliptic differential operator, while the respective choice for \eqref{eq:hypoco2} corresponds to the parabolic operator $v-\nabla\cdot (A\nabla v)$.  At this point, we prefer to remain general in the choice of $A$, very much following the point of view taken by Villani \cite{villani}. Some concrete choices of $A$ will be needed below to fully define the ensuing numerical method.

\change{The key second ingredient in proving solution behaviour as $t_f\to\infty$ is the availability of a Poincar\'e-Friedrichs/spectral gap inequality. Such inequalities typically hold for functions in standard Lebesgue spaces on bounded domains or for periodic ones, along with some boundary or moment conditions. For problems posed on whole Euclidean spaces, hypocoercivity is typically manifested in appropriately weighted Lebesque spaces \cite{villani}. In the present work, we are primarily interested in the mechanics of porting hypocoercivity into piecewise smooth Galerkin-type solutions; as such we shall assume that we have available compatible closure/boundary conditions for hypocoercivity to hold at the PDE level. }

%A comment on the different scalings present on each term on the right-hand side of \eqref{eq:hypoco} is also in order. It is possible to introduce dependence of the tensor $A$ on the time variable $t$ for the study of the singularities near $t=0$; we refer to \cite{herau,porretta_zuazua} for details. Since our interest lies in the long time provable computation robustness for long times, we shall not pursue this direction further here.

\change{Boundary/closure conditions in} kinetic PDE models posed on bounded domains $\Omega\subset\mathbb{R}^d$ \change{are designed to}  convey \change{various} physical properties in the vicinity of boundaries, e.g., reflection, diffusion, periodicity, etc. In the case of the classical Kolmogorov equation considered here, the $x$-variable is typically associated with particle velocity, while the $y$-variable models position/displacement. Performing the above calculations on a bounded domain $\Omega$ instead, integrations by parts yield
\begin{equation}\label{eq:hypoco3}
\begin{aligned}
a(u,u)=&\ \norm{u_x}{}^2+\frac{1}{2}\norm{\sqrt{xn_2}u}{\partial_+\Omega}^2+\norm{\sqrt{A}\nabla u_x}{}^2+\beta\norm{u_y}{}^2+\alpha(u_y, u_x)\\
&+(\frac{1}{2}xn_2\nabla u-n_1\nabla u_{x}, A\nabla u)_{\partial\Omega}.
\end{aligned}
\end{equation}
To arrive again at \eqref{eq:hypoco}
(or \eqref{eq:hypoco2}), it is sufficient to \change{ensure} that 
\begin{equation}\label{eq:additional_bc}
(\frac{1}{2}xn_2\nabla u-n_1\nabla u_{x}, A\nabla u)_{\partial\Omega}\ge 0{\color{blue}.}
\end{equation}
Alternatively, it may be possible to control part of $(\frac{1}{2}xn_2\nabla u-n_1\nabla u_{x}, A\nabla u)_{\partial\Omega}$ by the non-negative terms on the right-hand side of \eqref{eq:hypoco3}, with the remaining part to satisfy a non-negativity condition.

\change{We now give some specific examples. For illustration,} we set $\Omega:=(x^-,x^+)\times(y^-,y^+)$ %with $x^-\ge 0$
. Then,
 \[
\partial_0\Omega=\{(x^-,y),(x^+,y):\ y^-\le y< y^+\},  \quad  \partial_{\pm}\Omega=\{(x,y^{\pm}):\ x^-< x\le x^+\}.
 \]
\change{Further, we assume $u\in H^{5/2+\varepsilon}(\Omega)$-smoothness of the solution, $\varepsilon>0$ so that the last term on the right-hand side of \eqref{eq:hypoco3} is well-defined; this is a reasonable assumption in this context due to the hypoellipticity of Kolmogorov's equation \cite{hormander}.}

\change{As a first example, we remove the homogeneous boundary conditions on $\partial_{-,0}\Omega$ and, instead, we consider the family of sufficiently smooth functions on the torus $\mathbb{T}$. The continuity of partial derivatives implies that the last term on the right-hand side of \eqref{eq:hypoco3}, together with any additional boundary terms arising by the removal of the homogeneous boundary conditions, vanish.} 

\change{As a second example, we restrict the family of solutions to periodic functions in the diffusive $x$-direction, and we further assume that $u$ has vanishing inflow flux, viz., $u_y= 0$. 
	This gives}
 \begin{equation}\label{eq:periodic_bc}
(\frac{1}{2}xn_2\nabla u-n_1\nabla u_{x}, A\nabla u)_{\partial_0\Omega}=0,
 \end{equation}
since $xn_2=0$ on $\partial_0\Omega$. \change{Moreover, since $n_1=0$ and $u_y=0$ on $\partial_{\pm}\Omega$ and, observing that $u_x=0$ on $\partial_-\Omega$ (as $u_x$ is the tangential derivative on $\partial_-\Omega$ of the \emph{constant} initial condition),} we deduce
\[
(\frac{1}{2}xn_2\nabla u-n_1\nabla u_{x}, A\nabla u)_{\change{\partial_-}\Omega}= \frac{1}{2}(xn_2\nabla u, A\nabla u)_{\change{\partial_-}\Omega}
= 0 .
\]
%
%
%Supplementing also with no-flux boundary conditions on the inflow boundary $\partial_-\Omega$, viz.,
%\begin{equation}\label{eq:additional_bc_n1}
%n_2u_y =0 \quad \text{on}\quad \partial_-\Omega,
%\end{equation}
%we conclude
%\[
%(\frac{1}{2}xn_2\nabla u-n_1\nabla u_{x}, A\nabla u)_{\partial\Omega\backslash\partial_+\Omega}= \frac{1}{2}(xn_2\nabla u, A\nabla u)_{\partial\Omega\backslash\partial_+\Omega}
%= 0,
%\]
%since $\nabla u = (u_x,u_y)^T=0$ and $n_1=0$ on $\partial_{\color{blue}-}\Omega$. The periodicity in the $x$-variable and \eqref{eq:additional_bc_n1}, 
\change{Therefore, we deduce
\[
(\frac{1}{2}xn_2\nabla u-n_1\nabla u_{x}, A\nabla u)_{\partial\Omega} =(\frac{xn_2}{2}\nabla u, A\nabla u)_{\partial_\pm\Omega} =\frac{1}{2}\norm{\sqrt{|xn_2| A}\nabla u}{\partial_+\Omega}^2.
\]}
We refer, e.g., to \cite{risken,dujardin_herau_lafitte} for further discussion on additional boundary conditions. Different combinations of assumptions  are also possible to yield \eqref{eq:additional_bc}, such as vanishing second derivative conditions across the elliptic boundary, etc. 

\change{
	\begin{remark}\label{remark_3d}
		To highlight the potential generality of the approach, we briefly consider the three-dimensional version of Kolmogorov's equation: for $\Omega\subset\mathbb{R}^3$, find $u:(0,t_f]\times \Omega\to\mathbb{R}$, such that
		\begin{equation}\label{eq:kolmogorov-bounded_dD}
		\begin{aligned}
		u_t+L_3u=u_t - u_{xx}+  xu_y+yu_z  =&\ f, &\text{in } (0,t_f]\times\Omega,\\
		u=&\ u_0, &\text{on } \{0\}\times\Omega,\\
		u=&\ 0, &\text{on } (0,t_f]\times\partial_{-,0}\Omega,
		\end{aligned}
		\end{equation}
		for $t_f>0$ and $f\in H^1(\Omega)$.
		Remarkably, \eqref{eq:kolmogorov-bounded_dD} is smoothing, despite possessing explicit diffusion in \emph{one spatial dimension only}. Indeed, we have, respectively,
		\[
		[\partial_{x},\mbf{b}\cdot\nabla]=\partial_{y}, \quad 
		[\partial_{y},\mbf{b}\cdot\nabla]=\partial_{z},
		\]
		thereby, H\"ormander's rank condition is satisfied \cite{hormander}, implying that \eqref{eq:kolmogorov-bounded_dD} is, in fact, hypoelliptic! Moreover, since full rank is achieved via commutators involving the skew-symmetric 1st order part  $\mbf{b}\cdot\nabla u$, the PDE in \eqref{eq:kolmogorov-bounded_dD} satisfies also the commutator hypotheses of \cite[Theorem 24]{villani}. The developments discussed in this work can be transferred to this problem also with minor modifications only.

		\end{remark}
	}

Returning to the case of a general polygonal domain $\Omega\subset \mathbb{R}^2$, we shall assume the validity of the additional boundary conditions:
\begin{equation}\label{eq:additional_bc_gen}
\big(\frac{1}{2}xn_2\nabla u-n_1\nabla u_{x}\big)\big|_{\partial\Omega\backslash\partial_+\Omega}=0\quad\text{ and }\quad n_1\nabla u_{x}\big|_{\partial_+\Omega}=0,
\end{equation}
for the remainder of this work, which trivially  imply \eqref{eq:additional_bc}. We do so, instead of prescribing particular additional boundary conditions; we shall clearly state if the validity of \eqref{eq:additional_bc_gen} is assumed or not for the proof of various results below. We stress that, under minor modifications only, the theory presented below can still be valid for other suitable assumptions, \change{e.g., for periodic solutions also} by careful inspection of the proofs. \change{The combination of suitable boundary/closure conditions, whose nature ensures the availability of Poincar\'e-Friedrichs inequalities, is key in the solution behaviour as $t_f\to\infty$. Nonetheless, in the present work, our focus is on the derivation of Galerkin discretizations of the bilinear form $a(\cdot,\cdot)$; as such, we adopt for simplicity the convenient setting \eqref{eq:additional_bc_gen} for hypocoercivity-inducing boundary conditions.} 

\section{A finite element method}\label{sec:fem}
Motivated by the above discussion, we return to the variational representation \eqref{eq:weak_hypo} of problem \eqref{eq:kolmogorov-bounded},  \eqref{eq:additional_bc_gen} on a bounded domain $\Omega\subset \mathbb{R}^2$. For simplicity of the presentation we, henceforth, assume that $\Omega$ is polygonal also; extension to curved domains is possible via isoparametric versions of the finite element spaces discussed below. As is standard in discretisation by a Galerkin type finite element method, we introduce a family of triangulation\change{s} of $\Omega$, say $\mathcal{T}$, consisting of mutually disjoint open triangular elements $T\in \mathcal{T}$, whose closures cover $\bar{\Omega}$ exactly. Let also $h:\cup_{T\in\mathcal{T}} T\to\mathbb{R}_+$ be the local meshsize function defined elementwise by $h|_T:=h_T:={\rm diam}(T)$. For simplicity, we further assume that $\mathcal{T}$ is shape-regular, in the sense that the radius $\rho_T$ of the largest inscribed circle of each $T\in\mathcal{T}$ is bounded \change{from below} with respect to each element's diameter $h|_T$, uniformly as $\|h\|_{L_\infty(\Omega)}\to 0$ under mesh refinement. Also, we assume that $\mathcal{T}$ is locally quasi-uniform in the sense that the diameters of adjacent elements are uniformly bounded from above and below. Finally, let $\Gamma:=\cup_{T\in\mathcal{T}}\partial T$ denote the mesh skeleton, and $\Gamma_{\rm int}:=\Gamma\backslash\partial\Omega$.

We define the \change{standard, continuous,} finite element space subordinate to $\mathcal{T}$ by
\[
V_h\equiv V_h^p:=\{V\in H^1_{-,0}(\Omega): V|_T\in \mathbb{P}_p(T),\ T\in\mathcal{T}\},
\]
with $\mathbb{P}_p(\omega)$, $\omega\subset\mathbb{R}^2$, denoting the space of polynomials of total degree at most $p$ over $\omega$, $p=2,3,\dots$. 
Further, we define the \emph{jump} $\jump{w}$ of $w$ across the common element interface $e:=\partial T^+\cap \partial T^-$ of two adjacent elements $T^+,T^-$, by $\jump{w}|_e:=w|_{T^+}\mathbf{n}|_{T^+}+w|_{T^-}\mathbf{n}|_{T^-}$ with $\mathbf{n}|_T$ denoting the unit outward normal vector to each point of $\partial T$; correspondingly, we also define the \emph{average} $\av{w}|_e:=(w|_{T^+}+w|_{T^-})/2$ of $w$ across $e$. Finally, for boundary faces $e\subset \partial T\cap \partial\Omega$, we set $\jump{w}|_e:=w|_{T}\mathbf{n}|_{T}$ and $\av{w}|_e:=w|_{T}$, respectively. The above jump and average definitions are trivially extended to vector-valued functions by component-wise application. 

We shall also make use of the \emph{broken Sobolev spaces}
$
H^r(\Omega,\mathcal{T}):=\{v|_T\in H^r(T), T\in\mathcal{T}\},
$
with respective Hilbertian norm $\norm{w}{H^r(\Omega,\mathcal{T})}:=\big(\sum_{T\in\mathcal{T}}\norm{\nabla^r w}{T}^2+\norm{\av{h}^{1/2-r}\jump{w}}{\Gamma_{\rm int}}^2\big)^{1/2}$, $r\ge 1$. Further, we define the broken gradient $\nabla_\mathcal{T}$ to be the element-wise gradient operator with $\nabla_{\mathcal{T}}w|_T=\nabla w|_T$, $T\in\mathcal{T}$, for $w\in H^1(\Omega,\mathcal{T})$.

Starting from \eqref{eq:weak_hypo}, we consider the spatially discrete finite element method: \change{find $U\equiv U(t)\in V_h$, $t\in(0,t_f]$,} such that
\begin{equation}\label{eq:FEM}
\begin{aligned}
&(U_t,V)+ (U_x,V_x)+(xU_y,V)\\
+&(\nabla U_t,A\nabla V) +(A\nabla_{\mathcal{T}}  U_x, \nabla_{\mathcal{T}}  V_x)+(\nabla_{\mathcal{T}}(x U_y),A\nabla V)+s_h(U,V)\\
=\ &(f,V)+(\nabla f,A\nabla V),
\end{aligned}
\end{equation}
for all  $V\in V_h$, whereby
\[
\begin{aligned}
s_h(U,V):=&\ -\int_{\Gamma_{\rm int}\cup\partial_-\Omega}(0,x)^T\cdot\big( \alpha\jump{U_x}\av{V_x}+\beta \jump{U_x}\av{V_y}+ \beta \jump{U_y}\av{V_x}+\gamma \jump{U_y}\av{V_y}\big)\ud s\\
&\ +\int_{\Gamma_{\rm int}}\frac{|x n_2|}{2}\big( \kappa\jump{U_x}\cdot\jump{V_x}+\lambda \jump{U_y}\cdot\jump{V_y}\big)\ud s\\
&\ -\int_{\Gamma_{\rm int}\cup\partial_0\Omega}\Big(\av{A\nabla U_x}\cdot[\nabla V]_1+\av{A\nabla V_x}\cdot[\nabla U]_1-\tau [\nabla U]_1\cdot A[\nabla V]_1\Big)\ud s,
\end{aligned}
\]
with $[v]_1|_e:=(v|_e)^+n_1^++(v|_e)^-n_1^-$, for each internal face $e$ and $n_1$ denoting the first component of the unit normal vector to $e$ and $[v]_1|_e:=(v|_e)^+n_1^+$, for $e\subset\partial\Omega$, for some $\tau:\Gamma\to \mathbb{R}$ non-negative function to be determined precisely below and user-defined parameters $\kappa,\lambda\ge 0$;
we also set $U(0):=\Pi u_0\in V_h$, with $\Pi:L_2(\Omega)\to V_h$ denoting the orthogonal $L_2$-projection onto the finite element space. For brevity, we shall adopt the notation:
\[
s_{1,h}(w,v):=\int_{\Gamma_{\rm int}}\frac{|x n_2|}{2}\big( \kappa\jump{w_x}\cdot\jump{v_x}+\lambda \jump{w_y}\cdot\jump{v_y}\big)\ud s.
\]
The term $s_{1,h}(w,v)$ introduces numerical diffusion in the $(0,x)^T$-direction when $\kappa,\lambda>0$. Such additional numerical diffusion may be desirable in the presence of rough initial conditions. When additional numerical diffusion is not advantageous, we can choose $\kappa,\lambda=0$ giving $s_{1,h}(\cdot,\cdot)\equiv 0$.

%We also observe that the first term of $s_h$ vanishes for $(x,y)^T\in\partial\Omega\backslash(\partial_-\Omega\cap \partial_+\Omega)$ since $xn_2=0$ there.
As we shall see below, the stabilisation $s_h(\cdot,\cdot)$ is constructed so that the \change{method} \eqref{eq:FEM} is \emph{consistent} with \eqref{eq:weak_hypo} upon assuming the additional boundary conditions \eqref{eq:additional_bc_gen}.

\begin{lemma}\label{lem:stab} Let $\tau:\Gamma\to \mathbb{R}$ with $\tau= C_{\tau}p^2/\av{h}$, for $C_{\tau}>0$ large enough but independent of $h$ and of $p$.  Then, for any $0<\alpha < 1/2$, there exist $0< \gamma<\beta<1$ and a positive constant $c_0$, independent of the approximate solution $U\in V_h$ and of $t_f$, such that for any $0<\zeta<1$, we have
	\begin{equation}\label{eq:stab}
	\begin{aligned}
\begin{aligned}
&\|U(t_f)\|^2 +\|\sqrt{A}\nabla U(t_f)\|^2+(1-\zeta)\int_0^{t_f}\big(c_0 \| U\|^2+\|\sqrt{A}\nabla U\|^2\big)\ud t\\
&+\!\int_0^{t_f}\!\!\big(\|\sqrt{xn_2} U\|_{\partial_+\Omega}^2+\|\sqrt{xn_2 A} \nabla U\|_{\partial_+\Omega}^2\\
&\qquad+\|\sqrt{A}\nabla_\mathcal{T} U_x\|^2+ s_{1,h}(U,U)+\|\sqrt{\tau A} [\nabla U]_1\|_{\Gamma_{\rm int}\cup\partial_0\Omega}^2\big)\ud t
\\
\le\  &\frac{4}{\zeta}\int_0^{t_f}\big(c_0^{-1} \|f\|^2 +\|\sqrt{A}\nabla f\|^2\big)\ud t + \|\Pi u_0\|^2 +\|\sqrt{A}\nabla \Pi u_0\|^2;
\end{aligned}
	\end{aligned}
	\end{equation}
	when $f\equiv0$, we can trivially select $\zeta=0$ \change{in the proof, so that the first term on the right-hand side of \eqref{eq:stab} vanishes}. 
\end{lemma}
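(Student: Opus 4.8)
The plan is to test the scheme \eqref{eq:FEM} with $V=U$ and to reproduce, at the discrete level, the hypocoercive computation culminating in \eqref{eq:hypoco3}, with the stabilisation $s_h(\cdot,\cdot)$ accounting exactly for the interface contributions generated by broken integration by parts. First I would note that the two time-dependent terms collapse to $\frac12\frac{\ud}{\ud t}\big(\norm{U}{}^2+\norm{\sqrt A\nabla U}{}^2\big)$, since $A$ is symmetric and time-independent. It then remains to bound from below the spatial part $a_h(U,U):=\norm{U_x}{}^2+(xU_y,U)+(A\nabla_\mathcal{T} U_x,\nabla_\mathcal{T} U_x)+(\nabla_\mathcal{T}(xU_y),A\nabla U)+s_h(U,U)$.

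Next I would perform element-wise integration by parts. The skew term $(xU_y,U)$, since $U\in H^1_{-,0}(\Omega)$ vanishes on $\partial_{-,0}\Omega$ and $x$ is independent of $y$, yields $\frac12\norm{\sqrt{xn_2}U}{\partial_+\Omega}^2$ with no interface contribution. Broken integration by parts of $(\nabla_\mathcal{T}(xU_y),A\nabla U)$ and of the piece that forms $\norm{\sqrt A\nabla_\mathcal{T} U_x}{}^2$ then produces the coercive bulk $\norm{\sqrt A\nabla_\mathcal{T} U_x}{}^2+\beta\norm{U_y}{}^2+\alpha(U_y,U_x)$ exactly as in \eqref{eq:hypoco3}, the non-negative outflow term $\norm{\sqrt{xn_2 A}\nabla U}{\partial_+\Omega}^2$ (using \eqref{eq:additional_bc_gen} to discard the $\partial\Omega\setminus\partial_+\Omega$ contributions), plus interface integrals over $\Gamma_{\rm int}$ and $\partial_0\Omega$. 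By construction these interface integrals are matched term-by-term by the advection-flux block and the consistency-flux block of $s_h$, leaving behind the two non-negative penalties $s_{1,h}(U,U)$ and $\norm{\sqrt{\tau A}[\nabla U]_1}{\Gamma_{\rm int}\cup\partial_0\Omega}^2$ together with the symmetric consistency residual $-2\int_{\Gamma_{\rm int}\cup\partial_0\Omega}\av{A\nabla U_x}\cdot[\nabla U]_1\,\ud s$.

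The main obstacle will be controlling this residual without spending the bulk coercivity or the interface penalty. I would apply Cauchy--Schwarz face-by-face, weight by $\tau^{\pm1}$, and invoke the $hp$-inverse trace inequality $\norm{w}{\partial T}^2\le C_{\rm inv}p^2 h_T^{-1}\norm{w}{T}^2$ to pass from $\av{A\nabla U_x}$ on faces to $\norm{\sqrt A\nabla_\mathcal{T} U_x}{}^2$ in the bulk; this bounds the residual by a $C_\tau^{-1/2}$-small multiple of $\norm{\sqrt A\nabla_\mathcal{T} U_x}{}^2+\norm{\sqrt{\tau A}[\nabla U]_1}{\Gamma_{\rm int}\cup\partial_0\Omega}^2$, so that choosing $C_\tau$ large enough absorbs it with room to spare, which is precisely where the hypothesis $\tau=C_\tau p^2/\av h$ enters. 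For the bulk I would fix, e.g., $\beta=4\alpha^2/9$ and $\gamma=\alpha^3/3$ as in the passage to \eqref{eq:hypoco} (so that $0<\gamma<\beta<1$ and $A$ is non-negative definite for $0<\alpha<1/2$) and treat the indefinite $\alpha(U_y,U_x)$ by Young's inequality exactly as there, retaining a positive multiple of $\norm{U_x}{}^2+\norm{U_y}{}^2=\norm{\nabla U}{}^2$. Since $U$ vanishes on a portion of $\partial\Omega$ of positive measure, the Poincar\'e--Friedrichs inequality $\norm{U}{}^2\le C_{PF}\norm{\nabla U}{}^2$ converts part of this into $c_0\norm{U}{}^2$, and splitting the remainder between $\norm{\sqrt A\nabla U}{}^2$ (by boundedness of $A$) and $c_0\norm{U}{}^2$ delivers $a_h(U,U)\ge c_0\norm{U}{}^2+\norm{\sqrt A\nabla U}{}^2$ plus the non-negative face and $\partial_+\Omega$ terms appearing on the left of \eqref{eq:stab}.

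Finally, I would split the right-hand side $(f,U)+(\nabla f,A\nabla U)$ by Cauchy--Schwarz and Young's inequality, tuning the weights so that the resulting $U$-contributions are absorbed into a $\zeta$-fraction of the coercive terms $c_0\norm{U}{}^2+\norm{\sqrt A\nabla U}{}^2$; this yields the factor $(1-\zeta)$ on the left and $4/\zeta$ on the right of \eqref{eq:stab}. Integrating the resulting differential inequality over $(0,t_f)$, the time-derivative terms telescope to $\norm{U(t_f)}{}^2+\norm{\sqrt A\nabla U(t_f)}{}^2-\norm{\Pi u_0}{}^2-\norm{\sqrt A\nabla\Pi u_0}{}^2$ via $U(0)=\Pi u_0$, and, crucially, no Gr\"onwall step is invoked, so all constants are independent of $t_f$; the case $f\equiv0$ trivially admits $\zeta=0$.
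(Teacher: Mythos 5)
Your overall route is the paper's: test \eqref{eq:FEM} with $V=U$, perform broken integration by parts so that the interface terms cancel against the advection block of $s_h(\cdot,\cdot)$, absorb the consistency residual $\int\av{A\nabla U_x}\cdot[\nabla U]_1\ud s$ via weighted Cauchy--Schwarz and the $hp$-inverse inequality with $C_\tau$ large, and finish with Young, Poincar\'e--Friedrichs and time integration without Gr\"onwall. However, there is a genuine gap at the decisive coercivity step. From Young's inequality you retain $\norm{\sqrt{B}\nabla U}{}^2$ with $B={\rm diag}(2(1-\epsilon),2\beta-\alpha^2/(2\epsilon))$ (with your choices, $B={\rm diag}(1/2,2\alpha^2/9)$), and then propose to produce the pair $c_0\norm{U}{}^2+\norm{\sqrt{A}\nabla U}{}^2$ by splitting $\lambda_B^{\min}\norm{\nabla U}{}^2$ between Poincar\'e--Friedrichs and ``boundedness of $A$''. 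The latter only gives $\norm{\sqrt{A}\nabla U}{}^2\le\lambda_A^{\max}\norm{\nabla U}{}^2$, hence a coefficient of order $\lambda_B^{\min}/\lambda_A^{\max}\approx 2\alpha/9\ll 1$ in front of $\norm{\sqrt{A}\nabla U}{}^2$, not the coefficient $1$ asserted in \eqref{eq:stab}. This is not cosmetic: the unit coefficient is what permits $\zeta=0$ when $f\equiv 0$ and yields the decay rate $\min\{1,c_0\}$ in Theorem \ref{thm:hypoco}; your version only delivers a strictly smaller, $\alpha$-dependent rate, i.e., a weaker statement than the lemma claims. The paper's device is different: choose $\beta,\gamma$ so that $B-A$ itself is positive definite (e.g.\ $\beta=\alpha^2$, $\gamma=\alpha^3$, $\epsilon=1/2$, giving ${\rm det}(B-A)=\alpha^2(1-2\alpha)>0$ for $0<\alpha<1/2$) and use the exact splitting $\norm{\sqrt{B}\nabla U}{}^2=\norm{\sqrt{B-A}\nabla U}{}^2+\norm{\sqrt{A}\nabla U}{}^2$, applying Poincar\'e--Friedrichs only to the first summand. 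This is precisely the origin of ``there exist $0<\gamma<\beta<1$'' in the statement. Note also that your specific choice $\beta=4\alpha^2/9$, $\gamma=\alpha^3/3$ cannot be repaired in this way for all admissible $\alpha$, since then ${\rm det}(B-A)<0$ as $\alpha\to 1/2$.

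A second, conceptual, error: you invoke the additional boundary conditions \eqref{eq:additional_bc_gen} to discard the $\partial\Omega\setminus\partial_+\Omega$ contributions. Those conditions are imposed on the \emph{exact} solution $u$ and play no role in Lemma \ref{lem:stab}, which concerns the discrete solution $U\in V_h$ only; $U$ does not satisfy \eqref{eq:additional_bc_gen}, and the lemma's hypotheses do not assume it (the paper explicitly reserves \eqref{eq:additional_bc_gen} for consistency, Lemma \ref{lem:incons}). In the paper the boundary leftovers are handled purely discretely: $U$ vanishes on $\partial_{-,0}\Omega$ because $V_h\subset H^1_{-,0}(\Omega)$; the quadratic boundary form on $\partial_+\Omega$ is kept, being non-negative there since $xn_2\ge 0$ (it equals $\norm{\sqrt{xn_2A}\nabla U}{\partial_+\Omega}^2$); the corresponding $\partial_-\Omega$ term has the favourable sign $xn_2<0$ after combination with the $\partial_-\Omega$ part of the first block of $s_h$; and the faces of $\partial_0\Omega$ are included in the consistency/penalty block of $s_h$. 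Rewriting these two steps along the paper's lines would make your argument complete.
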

\begin{proof}
	Setting $V=U$ in \eqref{eq:FEM}, standard arguments give
	\begin{equation}\label{eq:stab_one}
	\begin{aligned}
	&\frac{1}{2}\frac{\ud}{\ud t}\big( \|U\|^2 +\|\sqrt{A}\nabla U\|^2\big) 
	+\|U_x\|^2+\frac{1}{2}\|\sqrt{xn_2} U\|_{\partial_+\Omega}^2\\
&	+\|\sqrt{A}\nabla_{\mathcal{T}} U_x\|^2+(\nabla_{\mathcal{T}}(xU_ y),A\nabla U)+s_h(U,U) \\ 
	=&\ (f,U)+(\nabla f,A\nabla U),
	\end{aligned}
	\end{equation}
	upon observing that $(xU_y,U)=\|\sqrt{xn_2} U\|_{\partial_+\Omega}^2/2$ as $U=0$ on $\partial_-\Omega$. 
We calculate
	\[
	\begin{aligned}
(\nabla(xU_y),A\nabla U)_T =&\ 
	\alpha(U_y,U_x)_T+\beta\|U_y\|_T^2+\alpha(U_x,xU_{yx})_T+\beta(U_y,xU_{yx})_T\\
	&+\beta(xU_{yy},U_x)_T+\gamma(xU_{yy},U_y)_T .
	\end{aligned}
	\]
	Observ\change{e} now the identities
	\[
	\alpha(U_x,xU_{yx})_T = \frac{\alpha}{2}\int_{\partial T}xn_2 U_x^2\ud s,\quad 
	\gamma(U_y,xU_{yy})_T = \frac{\gamma}{2}\int_{\partial T}xn_2 U_y^2\ud s,
	\]
	\[
	\beta(xU_{yy},U_x)_T= -\beta(xU_y,U_{xy})_T +\beta\int_{\partial T}xn_2U_xU_y\ud s\change{.}
	\]
Then, summing over all $T\in\mathcal{T}$, we deduce
	\begin{equation}\label{eq:stab_two}
	\begin{aligned}
(\nabla_{\mathcal{T}}(xU_y),A\nabla U) =&\ \alpha(U_y,U_x)+\beta\|U_y\|^2
	+\frac{1}{2}\int_{\Gamma}(0,x)^T\cdot \jump{\alpha U_x^2+2\beta U_xU_y+\gamma U_y^2}\ud s
	%\\=&\ \alpha(U_y,U_x)+\beta\|U_y\|^2-s_h(U,U)+\frac{1}{2}\int_{\partial_+\Omega}xn_2(\alpha U_x^2+2\beta U_xU_y+\gamma U_y^2)\ud s
	.
	\end{aligned}
	\end{equation}
	%upon noting the trivial identity $\jump{w^2}=2\jump{w}\av{w}$.
	Using \eqref{eq:stab_two} into \eqref{eq:stab_one}, noting that $\jump{WV}=\av{W}\jump{V}+\av{V}\jump{W}$ on $\Gamma_{\rm int}$ for $V,W\in V_h$, and integrating with respect to $t\in[0,t_f]$, therefore, gives
	\begin{equation}\label{eq:stab_left}
	\begin{aligned}
	&\|U({t_f})\|^2 +\|\sqrt{A}\nabla U({t_f})\|^2+2\int_0^{t_f}\|\sqrt{A}\nabla_\mathcal{T} U_x\|^2\ud t
	+\int_0^{t_f}\|\sqrt{xn_2} U\|_{\partial_+\Omega}^2\ud t\\
	&+\int_0^{t_f}\int_{\partial_+\Omega}xn_2(\alpha U_x^2+2\beta U_xU_y+\gamma U_y^2)\ud s\ud t
	+2\int_0^{t_f}\big(\|U_x\|^2+\alpha(U_y,U_x)+\beta\|U_y\|^2\big)\ud t\\
	&\ -4\int_0^{t_f}\int_{\Gamma_{\rm int}\cup\partial_0\Omega}\av{A\nabla U_x}[\nabla U]_1\ud s\ud t+2\int_0^{t_f}\big(\|\sqrt{\tau A} [\nabla U]_1\|_{\Gamma_{\rm int}\cup\partial_0\Omega}^2+s_{1,h}(U,U)\big)\ud t\\
	=&\ 2\int_0^{t_f}\big( (f,U)+(\nabla f,A\nabla U)\big)\ud t + \|\Pi u_0\|^2 +\|\sqrt{A}\nabla \Pi u_0\|^2.
	\end{aligned}
	\end{equation}
	Focusing on the fifth and sixth terms on the left-hand side of \eqref{eq:stab_left}, recalling \eqref{eq:bigA}, 
	the Cauchy-Schwarz and Young inequalities, along with elementary manipulations, imply
		\begin{equation}\label{eq:stab_left0}
	\int_{\partial_+\Omega}\!\!xn_2(\alpha U_x^2+2\beta U_xU_y+\gamma U_y^2)\ud s=	\int_{\partial_+\Omega}\!\!xn_2 |\sqrt{A}\nabla U|^2 \ud s
	\ge 0,
	\end{equation}
	and
	\begin{equation}\label{eq:stab_left1}
2\big(\|U_x\|^2+\alpha(U_y,U_x)+\beta\|U_y\|^2\big)
	\ge 2(1-\epsilon)\|U_x\|^2+\big(2\beta-\frac{\alpha^2}{2\epsilon}\big)\|U_y\|^2=\|\sqrt{B}\nabla U\|^2,
	\end{equation}
respectively, upon defining the diagonal matrix $B={\rm diag}(2(1-\epsilon),2\beta-\alpha^2/(2\epsilon))$, for any $0<\epsilon<1$.
	We, then, have
	\[
	%\|U_x\|^2+\alpha^2\|U_y\|^2=
	\|\sqrt{B}\nabla U\|^2= \|\sqrt{B-A}\nabla U\|^2+\|\sqrt{A}\nabla U\|^2
	\ge 
	\lambda_{B-A}^{\min} \|\nabla U\|^2+\|\sqrt{A}\nabla U\|^2,
	\]
	where $\lambda_{B-A}^{\min}$ denotes the smallest eigenvalue of $B-A$. There exist values $0<\gamma<\beta$ such that $\lambda_{B-A}^{\min}>0$. For instance, setting $ \beta = \alpha^2$, $\gamma=\alpha^3$, and $\epsilon=1/2$, we have ${\rm det}(B-A) = \alpha^2(1-2\alpha)$.
	A numerical investigation reveals that $\max_{0\le \alpha\le 1/2}\lambda_{B-A}^{\min}\approx 0.054429$ attained for a value $\alpha\approx 0.35060$, noting that $B-A$ has positive determinant for $0<\alpha<1/2$.

	 Since $U=0$ on $\partial_{-,0}\Omega$, which was assumed to be of positive one-dimensional Hausdorff measure, the validity of a Poincar\'e-Friedrichs/spectral gap inequality $\|U\|^2\le C_{PF}\|\nabla U\|^2$ with $C_{PF}\equiv C_{PF}(\Omega)$ positive constant, independent of $U$, is assured, thereby allowing us to conclude the lower bound
	\begin{equation}\label{eq:PF}
	\|\sqrt{B}\nabla U\|^2
	\ge 
	C_{PF}^{-1}\lambda_{B-A}^{\min} \| U\|^2+\|\sqrt{A}\nabla U\|^2.
	\end{equation}
	
 Thus, with the choices $\alpha\approx 0.35060$, $ \beta = \alpha^2$, $\gamma=\alpha^3$, and $\epsilon=1/2$, we can select $c_0= 0.054429/C_{PF}$. As we shall see below, $c_0$ effectively represents the exponent of the exponential decay to equilibrium\change{;} cf. Theorem \ref{thm:hypoco} below.

	Further, the symmetry of $A$ along with the Cauchy-Schwarz inequality and a standard inverse estimate yield, respectively,
	\begin{equation}\label{eq:dg_forever}
	\begin{aligned}
	\int_{\Gamma_{\rm int}\cup\partial_0\Omega}\av{A\nabla U_x}\cdot [\nabla U]_1\ud s&\le \|\varphi^{-1/2}\av{\sqrt{A}\nabla U_{x}}\|_{\Gamma_{\rm int}\cup\partial_0\Omega} \| \sqrt{\varphi A}[\nabla U]_1\|_{\Gamma_{\rm int}\cup\partial_0\Omega}\\
	&\le \|\sqrt{C_{\rm inv}p^2/h \varphi}\sqrt{A}\nabla_{\mathcal{T}} U_{x}\| \| \sqrt{\varphi A}[\nabla U]_1\|_{\Gamma_{\rm int}\cup\partial_0\Omega}\\
	&\le \frac{1}{4} \|\sqrt{A}\nabla_{\mathcal{T}}  U_{x}\|^2 +\|\sqrt{\tau A} [\nabla U]_1\|_{\Gamma_{\rm int}\cup\partial_0\Omega}^2\change{,}
	\end{aligned}
	\end{equation}
	 using the standard inverse inequality $\|v\|_{\partial{T}}^2\le C_{\rm inv}p^2/h_T\|v\|^2_{T}$ on \eqref{eq:dg_forever}, and selecting $\varphi=\tau\ge 4C_{\rm inv}p^2/\av{h}$ and resorting to the local quasiuniformity of the mesh.
	Inserting the last two bounds into \eqref{eq:stab_left} and, using the Cauchy-Schwarz inequality, setting $c_0:=C_{PF}^{-1}\lambda_{B-A}^{\min} $, we deduce the result
	for any $0< \zeta<1$; when $f\equiv0$ we can trivially select $\zeta=0$.
\end{proof}

We note that the specific choice of $A$ as a function of $\alpha$ provided in the proof above is \emph{not} unique. As such, we have avoided providing exact values of $\alpha,\beta,\gamma$ in the statement of Lemma \ref{lem:stab}. Nonetheless, preliminary numerical investigation shows that the choice of $A$ does not affect significantly the performance of the method, provided that $A$ is chosen to ensure \eqref{eq:PF} away from limiting values \cite{dg}.

	\begin{remark}  It is possible to optimise further the constant $c_0$ by working as follows: starting from $\|\sqrt{B}\nabla U\|^2$, we have
	\[	\|\sqrt{B}\nabla U\|^2= \|\sqrt{B-\nu A}\nabla U\|^2+\nu\|\sqrt{A}\nabla U\|^2
	\ge 
	\lambda_{B-\nu A}^{\min} \|\nabla U\|^2+\nu\|\sqrt{A}\nabla U\|^2,\]
for suitable $\nu>0$. This way it is possible to arrive at a different, possibly larger, value for $\lambda_{B-\nu A}^{\min}$ and, consequently, for $c_0$.  % We shall revisit this idea in Section \ref{sec:extension}.
	\end{remark}

The last proof highlights that the validity of the Poincar\'e-Friedrichs inequality is of central importance here, and in general in the study of trend to equilibrium for kinetic equations \cite{risken,herau_nier,villani}. In the present context of bounded spatial domain $\Omega$, we have taken the viewpoint of using standard (non-weighted) integral norms. Thus the Lebesgue measure of $\Omega$ is present in the constant $c_0$. The extension of the above framework to general Fokker-Planck equations on function spaces weighted by equilibrium distributions is an important question and will be discussed elsewhere. The availability of such a Poincar\'e-Friedrichs inequality (also known as spectral gap property in kinetic theory) facilitates the crucial feature of the proposed method: the absence of an exponential term of the form $\exp(ct_f)$ multiplying the data terms in the above stability estimate. Moreover, \eqref{eq:FEM} also immediately implies the decay of numerical solutions for the respective homogeneous problem. 

\begin{theorem}[Decay via hypocoercivity]\label{thm:hypoco}
Under the hypotheses of Lemma \ref{lem:stab}, the  finite element method \eqref{eq:FEM} for the homogeneous problem \eqref{eq:kolmogorov-bounded} with $f\equiv 0$ satisfies
	\[
	\|U(t_f)\|^{\change{2}} +\|\sqrt{A}\nabla U(t_f)\|^{\change{2}}\le {\rm e}^{-\min\{1,c_0\}t_f}\Big(\|\Pi u_0\|^{\change{2}} +\|\sqrt{A}\nabla \Pi u_0\|^{\change{2}}\Big),
	\]
	for all $t_f>0$.
\end{theorem}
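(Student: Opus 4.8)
The plan is to re-run the very energy argument underlying Lemma~\ref{lem:stab}, but to keep it in pointwise-in-time (differential) form rather than integrating over $[0,t_f]$. Since $f\equiv 0$ removes every data term, this produces a clean homogeneous differential inequality for the modified energy
\[
E(t):=\norm{U(t)}{}^2+\norm{\sqrt{A}\nabla U(t)}{}^2,
\]
to which Gr\"onwall's lemma applies directly and yields the claimed exponential decay. The point is that all the estimates assembled in the proof of Lemma~\ref{lem:stab} are already pointwise in $t$, so they can be used before the time-integration step.

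Concretely, I would start from the $V=U$ identity \eqref{eq:stab_one} with $f\equiv 0$, which isolates $\tfrac12\tfrac{\ud}{\ud t}E(t)$ together with $\norm{U_x}{}^2$, the $\partial_+\Omega$ boundary term, $\norm{\sqrt{A}\nabla_\mathcal{T}U_x}{}^2$, the quantity $(\nabla_\mathcal{T}(xU_y),A\nabla U)$, and the stabilisation $s_h(U,U)$. Substituting \eqref{eq:stab_two} and then invoking, \emph{at each fixed} $t$, exactly the bounds used in Lemma~\ref{lem:stab} — the non-negativity \eqref{eq:stab_left0} of the $\partial_+\Omega$ contribution, the volume coercivity \eqref{eq:stab_left1} followed by the Poincar\'e--Friedrichs inequality \eqref{eq:PF}, and the inverse-estimate control \eqref{eq:dg_forever} of the interface consistency term by the non-negative $\norm{\sqrt{A}\nabla_\mathcal{T}U_x}{}^2$ and penalty terms — I discard every manifestly non-negative boundary, interface-penalty and stabilisation quantity. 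What survives is the pointwise inequality
\[
\frac{\ud}{\ud t}E(t)+c_0\norm{U}{}^2+\norm{\sqrt{A}\nabla U}{}^2\le 0 .
\]

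To finish, I bound $c_0\norm{U}{}^2+\norm{\sqrt{A}\nabla U}{}^2\ge \min\{1,c_0\}\,E(t)$, multiply by the integrating factor $e^{\min\{1,c_0\}t}$ to obtain $\tfrac{\ud}{\ud t}\big(e^{\min\{1,c_0\}t}E(t)\big)\le 0$, and integrate from $0$ to $t_f$ using $U(0)=\Pi u_0$, giving $E(t_f)\le e^{-\min\{1,c_0\}t_f}E(0)$ with $E(0)=\norm{\Pi u_0}{}^2+\norm{\sqrt{A}\nabla \Pi u_0}{}^2$. The statement of the theorem then follows by passing from this squared-energy bound to the sum-of-norms form through standard norm equivalences applied to both the left-hand side and the initial data.

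I expect the only genuine obstacle to be the first, bookkeeping step: certifying that the \emph{entire} collection of boundary, interface and stabilisation contributions produced by \eqref{eq:stab_one}--\eqref{eq:stab_two} is pointwise non-negative, so that it may be dropped without degrading the coercivity constant $c_0$. This is precisely what the choice $\tau=C_\tau p^2/\av{h}$ with $C_\tau$ large (together with the local quasi-uniformity exploited in \eqref{eq:dg_forever}) guarantees; hence the hypotheses of Lemma~\ref{lem:stab} transfer verbatim and no new estimate is required beyond re-reading its proof with the final time-integration removed. A secondary, purely algebraic subtlety is the reconciliation of the squared-energy decay with the unsquared norms in the statement, where one has to track the multiplicative constant and the decay exponent carefully.
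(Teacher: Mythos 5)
Your proposal is correct and follows essentially the same route as the paper: the paper simply takes the integrated estimate \eqref{eq:stab} of Lemma~\ref{lem:stab} with $f\equiv 0$ and $\zeta=0$, drops the remaining non-negative terms, and applies Gr\"onwall's lemma to $\|U(t_f)\|^2+\|\sqrt{A}\nabla U(t_f)\|^2+\min\{1,c_0\}\int_0^{t_f}\big(\|U\|^2+\|\sqrt{A}\nabla U\|^2\big)\ud t\le \|\Pi u_0\|^2+\|\sqrt{A}\nabla \Pi u_0\|^2$, which is exactly your differential-form argument carried out after, rather than before, the integration in time. One remark on the step you flag as a ``purely algebraic subtlety'': from $E(t_f)\le {\rm e}^{-\min\{1,c_0\}t_f}E(0)$ one strictly obtains only $\|U(t_f)\|+\|\sqrt{A}\nabla U(t_f)\|\le \sqrt{2}\,{\rm e}^{-\min\{1,c_0\}t_f/2}\big(\|\Pi u_0\|+\|\sqrt{A}\nabla \Pi u_0\|\big)$ (halved exponent and a factor $\sqrt{2}$), a passage the paper's own proof performs silently as well, so this imprecision is inherited from the statement rather than introduced by your argument.
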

\begin{proof}
\change{Starting from \eqref{eq:stab_one} and using the estimates from the proof of Lemma \ref{lem:stab}, we arrive at the estimate}
	\[
\frac{\ud}{\ud t}\big( \|U\|^2 +\|\sqrt{A}\nabla U\|^2\big) 
	+\min\{1,c_0\}\big( \| U\|^2+\|\sqrt{A}\nabla U\|^2\big)
	\le 0,
	\]
	from which an application of Gr\"onwall's Lemma already implies the result. 	
\end{proof}

\section{Error analysis}\label{sec:error_analysis}
We continue by proving a priori error bounds for \eqref{eq:FEM}. For brevity, we shall denote by $\mathcal{B}(\cdot,\cdot):H^{5/2+\varepsilon}(\Omega,\mathcal{T})\times H^{5/2+\varepsilon}(\Omega,\mathcal{T})\to \mathbb{R}$, $\varepsilon>0$, the bilinear form given by
\[
\mathcal{B}(w,v):=(w_x,v_x)+(xw_y,v) +(A\nabla_{\mathcal{T}}  w_x, \nabla_{\mathcal{T}}  v_x)+(\nabla_{\mathcal{T}}(x w_y),A\nabla v)+s_h(w,v).
\]

Implicitly in the proof of Lemma \ref{lem:stab}, we proved also the following (hypo)coercivity result.
\begin{lemma}[(Hypo)coercivity]\label{lem:coercivity} Let
	\[
	\begin{aligned}
	\tnorm{w}:= &\Big(\|\sqrt{B}\nabla w\|^2  
	+\|\sqrt{xn_2} w\|_{\partial_+\Omega}^2+\|\sqrt{xn_2 A} \nabla w\|_{\partial_+\Omega}^2\\
	&	+ \|\sqrt{A}\nabla_\mathcal{T} w_x\|^2
	+\|\sqrt{\tau A} [\nabla w]_1\|_{\Gamma_{\rm int}\cup\partial_0\Omega}^2+ s_{1,h}(w,w)\Big)^{1/2},
	\end{aligned}
	\]
	with $B$ as in the proof of Lemma \ref{lem:stab}. Then, $\tnorm{\cdot}$ is a norm on $H^1_{-,0}(\Omega)\cap H^2(\Omega,\mathcal{T})$, and we have
	\[
\mathcal{B}(w,w)\ge\frac{1}{2}\tnorm{w}^2,
	\]
	for all $w\in V_h$.
	\qed
\end{lemma}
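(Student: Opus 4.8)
The plan is to extract the coercivity statement directly from the computation already carried out in the proof of Lemma~\ref{lem:stab}, where setting $V=U$ in \eqref{eq:FEM} produced all the relevant terms. I would first observe that $\mathcal{B}(w,w)$ is precisely the spatial part appearing in \eqref{eq:stab_one}--\eqref{eq:stab_left}, i.e.\ everything except the time-derivative and forcing contributions. Thus the key identity \eqref{eq:stab_two} for $(\nabla_\mathcal{T}(xw_y),A\nabla w)$, obtained by elementwise integration by parts, is reused verbatim to expand $\mathcal{B}(w,w)$ into the volume terms $\|w_x\|^2+\alpha(w_y,w_x)+\beta\|w_y\|^2+\|\sqrt{A}\nabla_\mathcal{T} w_x\|^2$, the boundary term $\tfrac12\|\sqrt{xn_2}\,w\|_{\partial_+\Omega}^2$, the face term from \eqref{eq:stab_left0} giving $\tfrac12\|\sqrt{xn_2 A}\,\nabla w\|_{\partial_+\Omega}^2$, together with $s_{1,h}(w,w)$, the consistency faceterms involving $\av{A\nabla w_x}\cdot[\nabla w]_1$, and the penalty $\|\sqrt{\tau A}[\nabla w]_1\|^2_{\Gamma_{\rm int}\cup\partial_0\Omega}$.

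**Next** I would handle the three groups of terms exactly as in Lemma~\ref{lem:stab}. The volume quadratic form $\|w_x\|^2+\alpha(w_y,w_x)+\beta\|w_y\|^2$ is bounded below via Young's inequality \eqref{eq:stab_left1} by $\tfrac12\|\sqrt{B}\nabla w\|^2$, using the diagonal matrix $B$ and the chosen parameters $\alpha\approx 0.35060$, $\beta=\alpha^2$, $\gamma=\alpha^3$, $\epsilon=1/2$. The indefinite consistency term $\av{A\nabla w_x}\cdot[\nabla w]_1$ is absorbed through the Cauchy--Schwarz and inverse-inequality chain \eqref{eq:dg_forever}: choosing $\tau=C_\tau p^2/\av{h}$ with $C_\tau$ large enough lets one bound its contribution by $\tfrac14\|\sqrt{A}\nabla_\mathcal{T} w_x\|^2$ plus a multiple of the penalty term that can be absorbed, leaving at least half of each positive term. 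Collecting the surviving halves of $\|\sqrt{B}\nabla w\|^2$, $\|\sqrt{A}\nabla_\mathcal{T} w_x\|^2$, the two $\partial_+\Omega$ boundary norms, the penalty norm, and $s_{1,h}(w,w)$ yields $\mathcal{B}(w,w)\ge\tfrac12\tnorm{w}^2$.

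**To establish** that $\tnorm{\cdot}$ is genuinely a \emph{norm} on $H^1_{-,0}(\Omega)\cap H^2(\Omega,\mathcal{T})$ (not merely a seminorm), I would check definiteness: if $\tnorm{w}=0$ then $\|\sqrt{B}\nabla w\|=0$, and since $B$ is positive definite for the chosen $\alpha\in(0,1/2)$ this forces $\nabla w=0$ on each element, so $w$ is piecewise constant; the penalty term $\|\sqrt{\tau A}[\nabla w]_1\|_{\Gamma_{\rm int}\cup\partial_0\Omega}$ and the $H^1_{-,0}$-conformity (boundary trace vanishing on $\partial_{-,0}\Omega$, which has positive measure) then force $w\equiv 0$, exactly as in the Poincar\'e--Friedrichs argument surrounding \eqref{eq:PF}. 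Homogeneity and the triangle inequality are immediate since $\tnorm{\cdot}$ is the square root of a sum of squared (semi)norms and a nonnegative symmetric bilinear form $s_{1,h}(\cdot,\cdot)$.

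**The main obstacle**, I expect, is purely bookkeeping rather than conceptual: one must verify that after using \eqref{eq:dg_forever} to absorb the indefinite consistency terms, a definite fraction of \emph{every} constituent seminorm of $\tnorm{w}$ survives on the right-hand side, so that the clean constant $\tfrac12$ is achievable. In Lemma~\ref{lem:stab} the absorption left the factor $2$ in front of $\|\sqrt{A}\nabla_\mathcal{T} w_x\|^2$ and one had room to spare; here, since $\mathcal{B}(w,w)$ carries the terms with coefficient $1$ rather than $2$, I would re-examine the Young's-inequality split in \eqref{eq:stab_left1} and the inverse-estimate constant in \eqref{eq:dg_forever} to confirm that choosing $C_\tau$ sufficiently large still delivers at least $\tfrac12\tnorm{w}^2$ uniformly in $h$ and $p$. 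This is the only place where the constants must be tracked carefully; everything else follows by quoting the displayed identities from the previous proof.
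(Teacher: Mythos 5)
Your proposal is correct and follows essentially the same route as the paper: the paper gives no separate proof, noting only that the result was established implicitly in the proof of Lemma \ref{lem:stab}, which is precisely the computation you reuse --- the identity \eqref{eq:stab_two}, the bounds \eqref{eq:stab_left0}--\eqref{eq:stab_left1}, and the absorption of the consistency face terms via \eqref{eq:dg_forever} with $C_\tau$ large enough. Your added verification that $\tnorm{\cdot}$ is definite (positive definiteness of $B$ plus the vanishing trace on $\partial_{-,0}\Omega$, which has positive measure) and your careful tracking of the factor-of-two difference between \eqref{eq:stab_left} and $\mathcal{B}(w,w)$ are exactly the points the paper leaves implicit, and both are handled correctly.
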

 
 For the remaining of this work, unless explicitly stated, we assume that $\epsilon$ is away from $1$ and that $\beta\sim \alpha^2$, so that $B=c_{B}{\rm diag}(1,\alpha^2)$ for a given constant $0<c_B\le 1$. Thus, we can take $C_{\rm 0}= (2c_B)^{-1}$. Note that this also covers the case of $A$ being singular, i.e., the operator $\nabla\cdot A\nabla(\cdot)$ being parabolic.  Next, we establish the consistency of the bilinear form $\mathcal{B}$.
\begin{lemma}\label{lem:incons}
Assume that for the solution $u$ of \eqref{eq:kolmogorov-bounded}, \eqref{eq:additional_bc_gen} we have $u(t,\cdot)\in H^1_{-,0}(\Omega)\cap H^{5/2+\varepsilon}(\Omega,\mathcal{T})$, $\varepsilon>0$, for almost all $t\in(0,t_f]$. Then, for almost all $t\in(0,t_f]$ and for all $V\in V_h$, we have
	\begin{equation}\label{eq:incons}
	(u_t,V)+(\nabla u_t,A\nabla V)+\mathcal{B}(u,V) = (f,V)+(\nabla f,A\nabla V).
	\end{equation}
	\end{lemma}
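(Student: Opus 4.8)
The plan is to obtain \eqref{eq:incons} by comparison with the exact variational identity \eqref{eq:weak_hypo}, which $u$ satisfies for every test function in $H^1_{-,0}(\Omega)$, hence in particular for every $V\in V_h\subset H^1_{-,0}(\Omega)$. Since the time-derivative terms $(u_t,V)+(\nabla u_t,A\nabla V)$ and the data terms $(f,V)+(\nabla f,A\nabla V)$ appear identically in \eqref{eq:incons} and \eqref{eq:weak_hypo}, it suffices to show that the two spatial forms agree on $V_h$, namely
\[
\mathcal{B}(u,V)=(u_x,V_x)+(xu_y,V)+(\nabla L u,A\nabla V).
\]
Throughout I would exploit the regularity $u(t,\cdot)\in H^1_{-,0}(\Omega)\cap H^{5/2+\varepsilon}(\Omega,\mathcal{T})$ so that the traces of $\nabla u$ and $\nabla^2 u$ on $\Gamma$ are well defined, $\nabla_{\mathcal{T}}u=\nabla u$, and all interior jumps $\jump{u_x}$, $\jump{u_y}$, $[\nabla u]_1$ vanish on $\Gamma_{\rm int}$.

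Next I would dispatch the terms that match verbatim. The contributions $(u_x,V_x)$ and $(xu_y,V)$ are common to both sides, and smoothness of $u$ gives $\nabla_{\mathcal{T}}(xu_y)=\nabla(xu_y)$, so that $(\nabla_{\mathcal{T}}(xu_y),A\nabla V)$ reproduces the $xu_y$-part of $(\nabla L u,A\nabla V)$. Recalling $Lu=-u_{xx}+xu_y$, the whole statement reduces to the single identity $(A\nabla_{\mathcal{T}}u_x,\nabla_{\mathcal{T}}V_x)+s_h(u,V)=(\nabla(-u_{xx}),A\nabla V)$.

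To treat the right-hand side I would write $\nabla u_{xx}=\partial_x(\nabla u_x)$ and integrate by parts element-by-element in the $x$-direction; using that $A$ is constant and symmetric, so that $\partial_x(A\nabla V)=A\nabla V_x$, this yields
\[
(\nabla(-u_{xx}),A\nabla V)=(A\nabla_{\mathcal{T}}u_x,\nabla_{\mathcal{T}}V_x)-\mathcal{F},\qquad \mathcal{F}:=\sum_{T\in\mathcal{T}}\int_{\partial T}(A\nabla u_x\cdot\nabla V)\,n_1\ud s,
\]
so that consistency is equivalent to $s_h(u,V)=-\mathcal{F}$. Splitting $\mathcal{F}$ over the skeleton, single-valuedness of $A\nabla u_x$ on interior faces collapses the two element contributions into $\int_{\Gamma_{\rm int}}\av{A\nabla u_x}\cdot[\nabla V]_1\ud s$, while on $\partial\Omega$ the factor $n_1$ restricts $\mathcal{F}$ to $\partial_0\Omega$ (as $n_1=0$ on $\partial_{\pm}\Omega$); altogether $\mathcal{F}=\int_{\Gamma_{\rm int}\cup\partial_0\Omega}\av{A\nabla u_x}\cdot[\nabla V]_1\ud s$. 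This matches exactly, with opposite sign, the \emph{primal} term of the third integral in $s_h$, so the claim collapses to showing that the first integral of $s_h$, the term $s_{1,h}(u,V)$, and the symmetric and penalty terms all vanish for the exact solution.

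The interior pieces are immediate: on $\Gamma_{\rm int}$ the vanishing of $\jump{u_x}$, $\jump{u_y}$ and $[\nabla u]_1$ annihilates the first integral over $\Gamma_{\rm int}$, the whole of $s_{1,h}(u,V)$, and the symmetric and penalty contributions. On $\partial_-\Omega$ one has $n_1=0$, so $n_1\nabla u_x=0$, and \eqref{eq:additional_bc_gen} then reads $\tfrac12 xn_2\nabla u=0$; since $xn_2\ne 0$ there, $\nabla u=0$, which removes the first integral over $\partial_-\Omega$. The genuinely delicate point—and the step I expect to be the main obstacle—is the $\partial_0\Omega$ bookkeeping: what remains to be shown is $\int_{\partial_0\Omega}\big(\av{A\nabla V_x}\cdot[\nabla u]_1-\tau[\nabla u]_1\cdot A[\nabla V]_1\big)\ud s=0$, where the factor $[\nabla u]_1=(\nabla u)\,n_1$ need not vanish under the mere Dirichlet trace $u|_{\partial_0\Omega}=0$. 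Here one must invoke \eqref{eq:additional_bc_gen} (which ties $n_1\nabla u_x$ to $\tfrac12 xn_2\nabla u$) together with the specific admissible closure it encodes on $\partial_0\Omega$; I would verify the cancellation closure-by-closure—e.g. for periodicity, where the two $\partial_0\Omega$ faces carry opposite normals and the contributions cancel in pairs, or for a vanishing-normal-derivative condition, where $(\nabla u)n_1=0$ outright—thereby concluding $s_h(u,V)=-\mathcal{F}$ and hence \eqref{eq:incons}.
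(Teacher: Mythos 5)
Your reduction of the claim to the single identity $s_h(u,V)=-\mathcal{F}$, with $\mathcal{F}=\int_{\Gamma_{\rm int}\cup\partial_0\Omega}\av{A\nabla u_x}\cdot[\nabla V]_1\,\ud s$, is correct, and up to that point your argument is essentially the paper's own proof: element-wise integration by parts on the $-u_{xx}$ contribution, single-valuedness of $\nabla u$ and $A\nabla u_x$ across $\Gamma_{\rm int}$ by smoothness, and the boundary conditions \eqref{eq:additional_bc_gen} to kill the $\partial_-\Omega$ term. Indeed, your bookkeeping is more explicit than the paper's one-line proof, and it correctly isolates the surviving symmetrising and penalty contributions on $\partial_0\Omega$.

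The genuine gap is that you then stop: the $\partial_0\Omega$ step, which you flag as ``the main obstacle'', is left as a conditional sketch, and the two resolutions you propose do not fit the lemma's hypotheses. The lemma concerns the fixed problem \eqref{eq:kolmogorov-bounded} with \eqref{eq:additional_bc_gen}; the closure is not a free parameter, so there is no ``closure-by-closure'' verification to perform. In particular, periodicity is unavailable: it contradicts the homogeneous Dirichlet condition $u=0$ on $\partial_{-,0}\Omega$ built into the problem and into $V_h\subset H^1_{-,0}(\Omega)$, and since every $V\in V_h$ vanishes on $\partial_0\Omega$ and $s_h$ treats boundary faces individually (with $\av{w}=w$, $\jump{w}=w\mathbf{n}$ there), opposite portions of $\partial_0\Omega$ are never paired, so no cancellation ``in pairs'' can occur. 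What must actually be invoked is that $[\nabla u]_1=(\nabla u)\,n_1$ vanishes on $\partial_0\Omega$: since $u=0$ on $\partial_0\Omega$ already annihilates the tangential component of $\nabla u$, this is exactly the statement $\nabla u\cdot\mathbf{n}=0$ on $\partial_0\Omega$, i.e., the natural condition that the Nitsche-type terms of $s_h$ impose weakly. The paper closes the proof precisely here, by combining the smoothness of $u$ with the tested forms of the assumed boundary conditions, namely $(\tfrac12 xn_2\nabla u-n_1\nabla u_x,A\nabla V)_{\partial\Omega\backslash\partial_+\Omega}=0$ and $(n_1\nabla u_x,A\nabla V)_{\partial_+\Omega}=0$; that is, it treats the vanishing of all boundary leftovers as part of what \eqref{eq:kolmogorov-bounded}, \eqref{eq:additional_bc_gen} and the regularity hypothesis deliver for the exact solution. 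Your write-up, whose only remaining branches either contradict the setting (periodicity) or silently add an assumption (vanishing normal derivative) without tying it to the stated hypotheses, therefore does not establish \eqref{eq:incons}.
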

\begin{proof}
	The proof follows by integration by parts and by the smoothness of $u$, noting that from \eqref{eq:additional_bc_gen}, we have
$
	(\frac{1}{2}xn_2\nabla u-n_1\nabla_{\mathcal{T}} u_{x}, A\nabla V)_{\partial\Omega\backslash\partial_+\Omega}=0$ and $(n_1\nabla{\mathcal{T}} u_{x}, A\nabla v)_{\partial_+\Omega}=0.
	$\end{proof}

Let $\pi:H^1_{-,0}(\Omega)\to V_h$, a projection operator onto the finite element space to be defined precisely below, and set 
\[
\rho:=u-\pi u,\qquad \vartheta:=\pi u -U.
\]
Employing \eqref{eq:incons} and \eqref{eq:FEM} with $V=\vartheta$, gives the error equation
\begin{equation}\label{eq:error_eq}
\begin{aligned}
&\frac{1}{2}\frac{\ud }{\ud t}\big(\|\vartheta(s)\|^2+\|\sqrt{A}\nabla \vartheta(s)\|^2\big)+\mathcal{B}(\vartheta,\vartheta)
= -(\rho_t,\vartheta)-(\nabla \rho_t,A\nabla \vartheta)-\mathcal{B}(\rho,\vartheta).
\end{aligned}
\end{equation}
We shall now construct a $\pi$, so that $\mathcal{B}(\rho,\vartheta)=0$. 

\begin{lemma}[(\change{H}ypo)elliptic projection]\label{lem:hypo_proj} Assume that $B=c_{B}{\rm diag}(1,\alpha^2)$, for a given constant $0<c_B\le 1$.
	For every $v\in H^1_{-,0}(\Omega)\cap H^{5/2+\varepsilon}(\Omega,\mathcal{T})$, $\varepsilon>0$, there exists a unique $\pi v\in V_h$ such that
	\begin{equation}\label{eq:hypo_proj}
	\begin{aligned}
	\mathcal{B}(\pi v,V)
	= \mathcal{B}(v,V), 
	\end{aligned}
	\end{equation}
	for all $V\in V_h$. Moreover, assuming further that $v\in H^{k_T}(T)$ for $T\in\mathcal{T}$, $k_T\ge3$, we have the approximation estimate:
	\begin{equation}\label{eq:error_bound_en}
	\tnorm{v-\pi v}^2\le C(A,\kappa,\lambda)\sum_{T\in\mathcal{T}}\frac{h_T^{2s_T}}{p^{2(k_T-3)}}x_T|u|_{H^{k_T}(T)}^2
,
	\end{equation}
	with $s_T:={\min\{p+1,k_T\}-2}$ and $x_T:=\max_{x\in T} \{1,|x|^2\}$, $T\in\mathcal{T}$.
\end{lemma}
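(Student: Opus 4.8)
The plan is to establish the two assertions of Lemma~\ref{lem:hypo_proj} separately: first the well-posedness of the (hypo)elliptic projection, and then the approximation estimate in the $\tnorm{\cdot}$-norm. For the well-posedness, since \eqref{eq:hypo_proj} defines a square linear system on the finite-dimensional space $V_h$, it suffices to prove uniqueness. Suppose $\mathcal{B}(\pi v, V) = 0$ for all $V \in V_h$; taking $V = \pi v$ and invoking the coercivity estimate $\mathcal{B}(\pi v, \pi v) \ge \frac{1}{2}\tnorm{\pi v}^2$ from Lemma~\ref{lem:coercivity}, we obtain $\tnorm{\pi v} = 0$. Since $\tnorm{\cdot}$ is a genuine norm on $H^1_{-,0}(\Omega)\cap H^2(\Omega,\mathcal{T})$ (again by Lemma~\ref{lem:coercivity}), this forces $\pi v = 0$, establishing injectivity and hence existence and uniqueness. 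Note the definition of $\pi v$ via \eqref{eq:hypo_proj} makes $\pi$ exactly the projection for which $\mathcal{B}(v - \pi v, V) = \mathcal{B}(\rho, \vartheta) = 0$, as required for the error equation \eqref{eq:error_eq}.

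For the approximation estimate, the standard strategy is to exploit Galerkin orthogonality. Let $\Pi_h v \in V_h$ denote a suitable quasi-interpolant (e.g.\ an $hp$-optimal Scott--Zhang or $L_2$-type projector respecting the essential boundary condition on $\partial_{-,0}\Omega$). Writing $v - \pi v = (v - \Pi_h v) - (\pi v - \Pi_h v)$ and setting $\eta := v - \Pi_h v$, $\xi := \pi v - \Pi_h v \in V_h$, coercivity gives $\frac{1}{2}\tnorm{\xi}^2 \le \mathcal{B}(\xi, \xi)$. Using $\mathcal{B}(\pi v - v, \xi) = 0$ from \eqref{eq:hypo_proj}, we rewrite $\mathcal{B}(\xi, \xi) = \mathcal{B}(\Pi_h v - v, \xi) + \mathcal{B}(\pi v - \Pi_h v, \xi) = -\mathcal{B}(\eta, \xi)$, so that $\tnorm{\xi}^2 \le 2|\mathcal{B}(\eta, \xi)|$. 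The task then reduces to a continuity bound $|\mathcal{B}(\eta, \xi)| \le C \tnorm{\xi}\, \Theta(\eta)$, where $\Theta(\eta)$ collects the interpolation-error seminorms; a triangle inequality $\tnorm{v - \pi v} \le \tnorm{\eta} + \tnorm{\xi}$ then completes the bound once $\tnorm{\eta}$ and $\Theta(\eta)$ are controlled by standard $hp$-approximation estimates on shape-regular, locally quasi-uniform meshes, producing the factors $h_T^{2s_T}/p^{2(k_T-3)}$ with $s_T = \min\{p+1, k_T\} - 2$.

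The main obstacle will be the continuity estimate for $\mathcal{B}(\eta, \xi)$, and in particular bounding the terms of $\mathcal{B}$ and of $\tnorm{\cdot}$ that involve \emph{second-order} element-wise derivatives (the $(A\nabla_\mathcal{T} w_x, \nabla_\mathcal{T} v_x)$ term and the face terms $\av{A\nabla U_x}\cdot[\nabla V]_1$ in $s_h$), since these are what force the regularity requirement $k_T \ge 3$ and the loss of two powers of $h$ relative to $H^{k_T}$. For these one must pair each bilinear-form contribution with the matching piece of $\tnorm{\cdot}$ via Cauchy--Schwarz, using the DG-type inverse and trace inequalities already invoked in \eqref{eq:dg_forever} to control $\av{A\nabla \eta_x}$ on faces by the weighted interior seminorm $\|\varphi^{-1/2}\av{\sqrt{A}\nabla \eta_x}\|$, so that the $\|\sqrt{\tau A}[\nabla \xi]_1\|$ term of $\tnorm{\xi}$ absorbs the face coupling. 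The appearance of $x_T := \max_{x\in T}\{1, |x|^2\}$ comes from the advective terms $(x w_y, v)$ and $(\nabla_\mathcal{T}(x w_y), A\nabla v)$, where $|x|$ must be pulled out as $\|x\|_{L_\infty(T)}$ before applying the interpolation estimate; tracking this weight carefully and verifying that every face contribution is matched against a control term already present in $\tnorm{\cdot}$ (with the correct $|xn_2|$ or $\tau$ weighting) is the delicate bookkeeping that the proof must carry out.
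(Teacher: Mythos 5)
Your skeleton coincides with the paper's: uniqueness (hence existence, by finite dimensionality and linearity) from the coercivity of Lemma \ref{lem:coercivity}; the splitting $v-\pi v=\eta-\xi$ with a quasi-optimal interpolant; Galerkin orthogonality from \eqref{eq:hypo_proj} giving $\tnorm{\xi}^2\le 2|\mathcal{B}(\eta,\xi)|$; then $hp$-interpolation bounds and a triangle inequality. The well-posedness paragraph is fine (modulo the slip that your intermediate identity $\mathcal{B}(\xi,\xi)=\mathcal{B}(\Pi_h v-v,\xi)+\mathcal{B}(\pi v-\Pi_h v,\xi)$ sums to $\mathcal{B}(\pi v-v,\xi)=0$ as written; the correct statement is $\mathcal{B}(\xi,\xi)=\mathcal{B}(\eta,\xi)$, which is what you in fact use). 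The genuine gap is in the step you defer to ``delicate bookkeeping'': your central claim, that $|\mathcal{B}(\eta,\xi)|$ can be estimated by pairing \emph{every} face contribution against ``a control term already present in $\tnorm{\cdot}$'', is false as stated. In the first term of $s_h(\eta,\xi)$,
\[
-\int_{\Gamma_{\rm int}\cup\partial_-\Omega}(0,x)^T\cdot\big(\alpha\jump{\eta_x}\av{\xi_x}+\beta\jump{\eta_x}\av{\xi_y}+\beta\jump{\eta_y}\av{\xi_x}+\gamma\jump{\eta_y}\av{\xi_y}\big)\ud s,
\]
the discrete function $\xi$ enters through \emph{averages} of $\nabla\xi$ on interior faces, and similarly the part $\av{A\nabla\xi_x}\cdot[\nabla\eta]_1$ of the third term of $s_h$ involves face averages of $\nabla\xi_x$. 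The norm $\tnorm{\cdot}$ controls volume quantities and \emph{jumps} only ($s_{1,h}(\xi,\xi)$ and $\|\sqrt{\tau A}[\nabla\xi]_1\|_{\Gamma_{\rm int}\cup\partial_0\Omega}$); it contains no interior-face averages, so there is simply no matching piece to pair these terms with, and Cauchy--Schwarz alone cannot close the argument.

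The paper resolves exactly this point by a mechanism your proposal never mentions: the advective terms $-(\eta_y,x\xi)$ and $(\nabla_{\mathcal{T}}(x\eta_y),A\nabla\xi)$ are integrated by parts element-wise (\eqref{eq:apriori_one}--\eqref{eq:apriori_three}), and the face terms so produced \emph{cancel identically} with the problematic first term of $s_h(\eta,\xi)$ --- this cancellation is the design purpose of that term of $s_h$. What survives has $\xi$ only in jumps (controlled by $\tnorm{\xi}$ when $\kappa,\lambda>0$, or via inverse estimates when $\kappa=\lambda=0$) and in volume terms which now carry \emph{second} derivatives $(\xi_x)_y,(\xi_y)_y$; these are likewise not controlled by $\tnorm{\cdot}$ (note $\|\sqrt{A}\nabla_{\mathcal{T}}\xi_x\|$ never sees $\xi_{yy}$) and are tamed by the inverse estimate $\|\nabla V\|_T\lesssim p^2/h_T\|V\|_T$ with the balancing weight $\mu=h/p^2$ --- this is precisely where the exponent $s_T=\min\{p+1,k_T\}-2$ and the $p$-suboptimality in \eqref{eq:error_bound_en} originate, a mechanism absent from your outline. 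A repair of your more direct route (no integration by parts) is conceivable by applying the inverse trace inequality to the $\xi$-averages so as to fold them into $\|\xi_x\|$, $\|\xi_y\|$, $\|\sqrt{A}\nabla_{\mathcal{T}}\xi_x\|$, all present in $\tnorm{\cdot}$; but note that you invoke inverse inequalities on the wrong side: you propose them for $\av{A\nabla\eta_x}$, whereas $\eta=v-\Pi_h v$ is \emph{not} piecewise polynomial, so only the trace estimate $\|w\|_{\partial T}^2\le C\|w\|_T\|w\|_{H^1(T)}$ (together with the $k_T\ge 3$ regularity) is admissible there, as in the paper; inverse estimates are legitimate only on the $\xi\in V_h$ side.
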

\begin{proof}
Uniqueness (and, therefore, existence due to linearity) follows from the coercivity of $\change{\mathcal{B}}$ shown in Lemma \ref{lem:coercivity}. Setting now $\eta:=v-\mathcal{P} v$ and $\xi:=\mathcal{P} v-\pi v$, with $\mathcal{P}$ denoting a projection operator from $H^1_{-,0}(\Omega)$ onto $V_h$ \change{with optimal approximation properties with respect to the local meshsize $h$}, we have
\begin{equation}\label{eq:apriori_xi}
\begin{aligned}
\frac{1}{2}\tnorm{\xi}^2
\le&\ \mathcal{B}(\xi, \xi) = -\mathcal{B}(\eta, \xi )\\
=& -(\eta_x,\xi_x)-(\eta_y,x\xi) -(\sqrt{A}\nabla_{\mathcal{T}}  \eta_x, \sqrt{A}\nabla_{\mathcal{T}}  \xi_x)\\
&-(\nabla_{\mathcal{T}}(x \eta_y),A\nabla \xi)-s_h(\eta,\xi).
\end{aligned}
\end{equation}
Now, integration by parts yields
\begin{equation}\label{eq:apriori_one}
-(\eta_y,x\xi) =(x\eta,\xi_y) -\int_{\partial_+\Omega}xn_2\eta\xi\ud s,
\end{equation}
and
\begin{equation}\label{eq:apriori_two}
\begin{aligned}
&(\nabla_{\mathcal{T}}(x \eta_y),A\nabla \xi) \\
=& \sum_{T\in\mathcal{T}} \Big( (\eta_y + x\eta_{xy}, \alpha\xi_x+\beta\xi_y)_T+(x\eta_{yy} , \beta\xi_x+\gamma\xi_y)_T\Big)\\
=&\ \alpha(\eta_y,\xi_x)+\beta(\eta_y,\xi_y)
- \sum_{T\in\mathcal{T}} \Big( (\eta_{x},x(\alpha\xi_{x}+\beta\xi_y)_y)_T +(\eta_{y},x(\beta\xi_{x}+\gamma\xi_y)_y)_T\Big)\\
&+ \sum_{T\in\mathcal{T}} \int_{\partial T} x n_2\big( \alpha \eta_{x}\xi_{x}+\beta\eta_x\xi_y+\beta\eta_y\xi_{x}+\gamma\eta_y\xi_y\big)\ud s.
\end{aligned}
\end{equation}
Observe that the last term on the right-hand side of \eqref{eq:apriori_two} is equal to
\begin{equation}\label{eq:apriori_three}
\begin{aligned}
 &\int_{\Gamma_{\rm int}\cup\partial_-\Omega}(0,x)^T\cdot \big( \alpha \jump{\eta_{x}}\av{\xi_{x}}+\beta\jump{\eta_x}\av{\xi_y}+\beta\jump{\eta_y}\av{\xi_{x}}+\gamma\jump{\eta_y}\av{\xi_y}\big)\ud s\\
 &+\int_{\Gamma_{\rm int}}(0,x)^T\cdot \big( 
 \alpha \av{\eta_{x}}\jump{\xi_{x}}+\beta\av{\eta_x}\jump{\xi_y}+\beta\av{\eta_y}\jump{\xi_{x}}+\gamma\av{\eta_y}\jump{\xi_y}\big)\ud s\\
 &+\int_{\partial_+\Omega}xn_2 A\nabla \eta \cdot \nabla \xi\ud s,
\end{aligned}
\end{equation}
and note that the first term of \eqref{eq:apriori_three} cancels with the first term of $s_h(\eta,\xi)$.
Using now \eqref{eq:apriori_one}, \eqref{eq:apriori_two} and \eqref{eq:apriori_three} into \eqref{eq:apriori_xi}, along with standard Cauchy-Schwarz' and Young's inequality arguments, and working as in \eqref{eq:dg_forever} gives
\begin{equation*}
\begin{aligned}
&C^{-1}\tnorm{\xi}^2\\
\le&\ \norm{\eta_x}{}^2+ \beta^{-1}\norm{x\eta}{}^2
+\norm{\sqrt{xn_2}\eta}{\partial_+\Omega}^2
+\norm{\sqrt{xn_2A}\nabla\eta}{\partial_+\Omega}^2+\norm{\sqrt{A}\nabla_{\mathcal{T}}  \eta_x}{}^2\\
&+(\alpha^2+\beta) \norm{\eta_y}{}^2+\norm{\sqrt{\tau A}[\nabla \eta]_1}{\Gamma_{\rm int}}^2+\kappa\norm{\sqrt{|xn_2|}\jump{\eta_x}}{\Gamma_{\rm int}}^2+\lambda\norm{\sqrt{|xn_2|}\jump{\eta_y}}{\Gamma_{\rm int}}^2\\
&+ \sum_{T\in\mathcal{T}}\Big( \norm{\mu^{-1}x\eta_{x}}{T}\big(\alpha\norm{\mu(\xi_{x})_y}{T}
+\beta\norm{\mu(\xi_y)_y}{T}\big)
+\norm{\mu^{-1}x\eta_{y}}{T}\big(\beta\norm{\mu(\xi_{x})_y}{T}
+\gamma\norm{\mu(\xi_y)_y}{T}\big)\Big)\\
&+\norm{\sqrt{A/\tau}\av{\nabla\eta_x}}{\Gamma_{\rm int}}^2 +
\big(\alpha^2\kappa^{-1}+\beta^2\lambda^{-1}\big) \norm{\sqrt{xn_2}\av{\eta_{x}}}{\Gamma_{\rm int}}^2
+\big(\beta^2\kappa^{-1}+\gamma^2\lambda^{-1}\big) \norm{\sqrt{xn_2}\av{\eta_{y}}}{\Gamma_{\rm int}}^2
,
\end{aligned}
 \end{equation*}
for any $\mu,\kappa,\lambda>0$, for some $C>1$, independent of the, relevant to the argument, parameters. Alternatively, when $\kappa,\lambda=0$, we work as follows: using the inverse estimate $\norm{\xi_x}{\partial T}^2\le Cp^2/h_T\norm{\xi_x}{T}^2$ and, similarly for $\xi_y$, we further estimate the second term on the right-hand side of \eqref{eq:apriori_three} to arrive instead at
\begin{equation*}
\begin{aligned}
&C^{-1}\tnorm{\xi}^2\\
\le&\ \norm{\eta_x}{}^2+ \beta^{-1}\norm{x\eta}{}^2
+\norm{\sqrt{xn_2}\eta}{\partial_+\Omega}^2+\norm{\sqrt{A}\nabla_{\mathcal{T}}  \eta_x}{}^2+(\alpha^2+\beta) \norm{\eta_y}{}^2+\norm{\sqrt{\tau A}[\nabla \eta]_1}{\Gamma}^2\\
&+ \sum_{T\in\mathcal{T}}\Big( \norm{\mu^{-1}x\eta_{x}}{T}\big(\alpha\norm{\mu(\xi_{x})_y}{T}
+\beta\norm{\mu(\xi_y)_y}{T}\big)
+\norm{\mu^{-1}x\eta_{y}}{T}\big(\beta\norm{\mu(\xi_{x})_y}{T}
+\gamma\norm{\mu(\xi_y)_y}{T}\big)\Big)\\
&+
C|AB^{-1/2}|_2\big( \norm{xn_2h^{-1/2}\av{\eta_{x}}}{\Gamma_{\rm int}}^2+\norm{xn_2h^{-1/2}\av{\eta_{y}}}{\Gamma_{\rm int}}^2\big)
+\frac{1}{4}\norm{\sqrt{B}\nabla\xi}{}^2,
\end{aligned}
\end{equation*}
with $|\cdot|_2$ denoting the matrix-2-norm. Applying the inverse estimate $\norm{\nabla v}{T}^2\le Cp^4/h^2_T\norm{v}{T}^{\change{2}}$, for $v\in \mathbb{P}_p(T)$ on the terms containing $\mu\xi$ in the last two alternative estimates, selecting $\mu=h/p^2$, and performing standard manipulations, we arrive at the combined estimate
\begin{equation*}
\begin{aligned}
C^{-1}\tnorm{\xi}^2
\le&\ \norm{\eta_x}{}^2+ \beta^{-1}\norm{x\eta}{}^2
+\norm{\sqrt{xn_2}\eta}{\partial_+\Omega}^2+\norm{\sqrt{A}\nabla_{\mathcal{T}}  \eta_x}{}^2+\norm{\sqrt{\tau A}[\nabla \eta]_1}{\Gamma}^2 \\
&+  (\alpha^2+\beta)\big(\norm{\mu^{-1}x\eta_{x}}{}^2+\norm{\eta_y}{}^2\big)
+\big(\beta^2+\frac{\gamma^2}{\beta}\big)\norm{\mu^{-1}x\eta_{y}}{}^2\\
&+\min\Big\{
\big(\alpha^2\kappa^{-1}+\beta^2\lambda^{-1}\big) \norm{\sqrt{xn_2}\av{\eta_{x}}}{\Gamma_{\rm int}}^2
+\big(\beta^2\kappa^{-1}+\gamma^2\lambda^{-1}\big) \norm{\sqrt{xn_2}\av{\eta_{y}}}{\Gamma_{\rm int}}^2,\\
&\qquad\qquad
C|AB^{-1/2}|_2\big( \norm{xn_2h^{-1/2}\av{\eta_{x}}}{\Gamma_{\rm int}}^2+\norm{xn_2h^{-1/2}\av{\eta_{y}}}{\Gamma_{\rm int}}^2\big)
\Big\}\\
&+\kappa\norm{\sqrt{|xn_2|}\jump{\eta_x}}{\Gamma_{\rm int}}^2+\lambda\norm{\sqrt{|xn_2|}\jump{\eta_y}}{\Gamma_{\rm int}}^2,
\end{aligned}
\end{equation*}
which holds for any $\kappa,\lambda\ge 0$.
Choosing $\mathcal{P}$ to be an $hp$-optimal projection operator onto the finite element space $V_h$, we can assume approximation estimates of the form
\[
\norm{\nabla^m\eta}{T}\le Ch_T^{\min\{p+1,k\}-m}p^{m-k}|u|_{H^{k}(T)}
\]
for $k>m$, with $u\in H^k(T)$, $T\in\mathcal{T}$ (see, e.g., \cite{schwab,cdgh_book} for examples of such operators). These, together with the trace estimate $\norm{w}{\partial T}^2\le C\norm{w}{T}\norm{w}{H^1(T)}$ and the triangle inequality 
$\tnorm{\rho}\le\tnorm{\eta}+\tnorm{\xi}$, already imply \eqref{eq:error_bound_en}.
\end{proof}

We now show the (potentially super-)approximation for the left-hand side of \eqref{eq:error_eq}. Such results are typical to finite element method for parabolic problems. We shall show that this is the case also for \eqref{eq:FEM} discretising the, degenerate, Kolmogorov equation.

\begin{lemma} Assume that the exact solution $u$ to  \eqref{eq:kolmogorov-bounded}, \eqref{eq:additional_bc_gen} satisfies $u\in H^1(0,t_f;H^1_{-,0}(\Omega)\cap H^{5/2+\varepsilon}(\Omega,\mathcal{T})) $. Then, under
 the hypotheses of Lemma \ref{lem:stab}, we have the following bound
\begin{equation}\label{eq:bound4theta}
\begin{aligned}
\norm{\vartheta(t)}{}^2+\norm{\sqrt{A}\nabla\vartheta(t)}{}^2+\frac{1}{2}\int_0^t\tnorm{\vartheta}{}^2\ud s
\le&\  \norm{\vartheta(0)}{}^2+\norm{\sqrt{A}\nabla\vartheta(0)}{}^2\\
&+4\max\{1,c_0^{-2}\}\int_0^t\norm{\sqrt{B}\nabla\rho_t}{}^2\ud s%+C\int_0^t\norm{\sqrt{\tau A}\nabla u}{\partial\Omega}^2\ud s
,
\end{aligned}
\end{equation}	
for any $t\in(0,t_f]$, for some positive constant $C$, depending only on the shape-regularity and the local quasi-uniformity of the mesh.
\end{lemma}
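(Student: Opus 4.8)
The plan is to start from the error equation \eqref{eq:error_eq}, which already isolates the coercive quantity $\mathcal{B}(\vartheta,\vartheta)$ on the left. The crucial structural feature I would exploit is that the projection $\pi$ constructed in Lemma \ref{lem:hypo_proj} is designed precisely so that $\mathcal{B}(\rho,\vartheta)=0$ for all $V=\vartheta\in V_h$; this annihilates the last term on the right-hand side of \eqref{eq:error_eq}. Hence the error equation collapses to
\[
\frac{1}{2}\frac{\ud}{\ud t}\big(\norm{\vartheta}{}^2+\norm{\sqrt{A}\nabla\vartheta}{}^2\big)+\mathcal{B}(\vartheta,\vartheta) = -(\rho_t,\vartheta)-(\nabla\rho_t,A\nabla\vartheta),
\]
leaving only the two time-derivative-of-$\rho$ terms to control. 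The next step is to invoke the (hypo)coercivity of Lemma \ref{lem:coercivity}, namely $\mathcal{B}(\vartheta,\vartheta)\ge\frac{1}{2}\tnorm{\vartheta}^2$, to convert the bilinear form on the left into control of the full triple-norm $\tnorm{\vartheta}$.

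Next I would bound the two right-hand terms. Writing $(\rho_t,\vartheta)+(\nabla\rho_t,A\nabla\vartheta)=(\nabla\rho_t, B'\nabla\vartheta)+\dots$ is not quite the right grouping, so more carefully I would treat the two terms together via Cauchy--Schwarz and Young: $(\rho_t,\vartheta)\le \frac{c_0}{2}\norm{\vartheta}{}^2+\frac{1}{2c_0}\norm{\rho_t}{}^2$ and $(\nabla\rho_t,A\nabla\vartheta)=(\sqrt{A}\nabla\rho_t,\sqrt{A}\nabla\vartheta)\le\frac{1}{2}\norm{\sqrt{A}\nabla\vartheta}{}^2+\frac{1}{2}\norm{\sqrt{A}\nabla\rho_t}{}^2$. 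The key point is that both $\norm{\vartheta}{}^2$ (through the Poincar\'e--Friedrichs-derived constant $c_0$ and the identity $\tnorm{\vartheta}^2\ge c_0\norm{\vartheta}{}^2+\norm{\sqrt{A}\nabla\vartheta}{}^2$ implicit in \eqref{eq:PF}) and $\norm{\sqrt{A}\nabla\vartheta}{}^2$ appearing on the right of Young's inequality are absorbable into a fraction of $\tnorm{\vartheta}^2$, leaving a clean residual of $\frac{1}{4}\tnorm{\vartheta}^2$ (say) on the left after absorption. The data terms $\norm{\rho_t}{}^2$ and $\norm{\sqrt{A}\nabla\rho_t}{}^2$ then combine into $\norm{\sqrt{B}\nabla\rho_t}{}^2$ up to the factor $\max\{1,c_0^{-2}\}$, matching the right-hand side of \eqref{eq:bound4theta}.

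The final step is simply to integrate the resulting differential inequality over $(0,t)$; the time-derivative term yields the boundary contributions $\norm{\vartheta(t)}{}^2+\norm{\sqrt{A}\nabla\vartheta(t)}{}^2-\norm{\vartheta(0)}{}^2-\norm{\sqrt{A}\nabla\vartheta(0)}{}^2$, while the surviving $\frac{1}{2}\int_0^t\tnorm{\vartheta}^2\ud s$ appears on the left and the data term $\int_0^t\norm{\sqrt{B}\nabla\rho_t}{}^2\ud s$ on the right, which is exactly \eqref{eq:bound4theta}. I expect the main obstacle to be the bookkeeping in the absorption step: one must verify that the coefficients produced by Young's inequality genuinely fit under the $\tnorm{\vartheta}^2$ budget, which requires using the lower bound $\tnorm{\vartheta}^2\ge \norm{\sqrt{B}\nabla\vartheta}{}^2\ge c_0\norm{\vartheta}{}^2+\norm{\sqrt{A}\nabla\vartheta}{}^2$ (via \eqref{eq:PF}) rather than a crude estimate, and choosing the Young parameters so that the residual is at least $\frac{1}{2}\tnorm{\vartheta}^2$ and the data constant is exactly $4\max\{1,c_0^{-2}\}$. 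Notably, the genuinely robust feature — no $\exp(ct_f)$ factor — emerges automatically here precisely because $\frac{1}{2}\int_0^t\tnorm{\vartheta}^2$ (hence a time-integrated $\norm{\vartheta}{}^2$) survives on the left, so no Gr\"onwall amplification is needed, in exact parallel with the stability analysis of Lemma \ref{lem:stab}.
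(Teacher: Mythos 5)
Your proposal follows essentially the same route as the paper's proof: the error equation with $\mathcal{B}(\rho,\vartheta)=0$ enforced by the (hypo)elliptic projection, the coercivity bound $\mathcal{B}(\vartheta,\vartheta)\ge\tfrac12\tnorm{\vartheta}^2$, Cauchy--Schwarz/Young on $(\rho_t,\vartheta)+(\nabla\rho_t,A\nabla\vartheta)$, absorption via \eqref{eq:PF}, and time integration with no Gr\"onwall factor. The only quibble is that your stated Young splits ($\tfrac{c_0}{2}\norm{\vartheta}{}^2$ and $\tfrac12\norm{\sqrt{A}\nabla\vartheta}{}^2$) would, after the factor-of-two rescaling, consume the entire coercivity budget and leave no residual $\tnorm{\vartheta}^2$ on the left; but you correctly identify this bookkeeping issue yourself, and retuning to weights $\tfrac{c_0}{4}$ and $\tfrac14$ (as the paper effectively does) yields the residual $\tfrac12\int_0^t\tnorm{\vartheta}^2\ud s$ and a data constant $2\max\{1,c_0^{-2}\}\le 4\max\{1,c_0^{-2}\}$ as claimed.
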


\begin{proof} Starting from \eqref{eq:error_eq}, upon 
	integration in time between $0$ and $t\in(0,t_f]$, multiplying by $2$ and using standard arguments, gives
	\begin{equation}\label{eq:bound4theta2}
	\begin{aligned}
\norm{\vartheta(t)}{}^2+\norm{\sqrt{A}\nabla\vartheta(t)}{}^2+\int_0^t\tnorm{\vartheta}{}^2\ud s\le&\  \norm{\vartheta(0)}{}^2+\norm{\sqrt{A}\nabla\vartheta(0)}{}^2\\
&+2\int_0^t\big(c_0^{-1}\norm{\rho_t}{}^2 +\norm{\sqrt{A}\nabla\rho_t}{}^2\big)\ud s\\
&+ \frac{1}{2}\int_0^t \big(c_0\norm{\vartheta}{}^2 +\norm{\sqrt{A}\nabla\vartheta}{}^2\big)\ud s,
\end{aligned}
	\end{equation}
	recalling the notation $c_0=C_{PF}^{-1}\lambda^{\min}_{B-A}$. Using now \eqref{eq:PF}, we deduce the result.
\end{proof}

The above developments yield an a priori error bound in the norm appearing on the left-hand side of \eqref{eq:bound4theta}.

\begin{theorem}\label{thm:error_analysis_one} Assume that $u_0, u, u_t\in H^1_{-,0}(\Omega)\cap H^{k_T}(T)$ for $k_T\ge 3$, $T\in\mathcal{T}$, the latter two for almost all $t\in(0,t_f]$, with $u$ being the exact solution to   \eqref{eq:kolmogorov-bounded}, \eqref{eq:additional_bc_gen}.
	Then, under the assumptions of Lemmata \ref{lem:stab} and \ref{lem:hypo_proj}, the error $e:=u-U$ of the finite element method \eqref{eq:FEM} satisfies the bound:
		\begin{equation}\label{eq:apriori_bound}
	\begin{aligned}
	\norm{e(t)}{}^2+\norm{\sqrt{A}\nabla e(t)}{}^2+\int_0^t\tnorm{e}{}^2\ud s\le&\  C(A,\kappa,\lambda)
	\sum_{T\in\mathcal{T}} \mathcal{E}_T(u_0,t,u)
	,
	\end{aligned}
	\end{equation}
	where
	\[
\mathcal{E}_T(u_0,t,u):=	\frac{h_T^{2s_T}x_T}{p^{2(k_T-3)}} \Big(\max\{1,c_0^{-1}\}|u_0|_{H^{k_T}(T)}^2 + |u_t|_{L_2(0,t;H^{k_T}(T))}^2+|u|_{L_2(0,t;H^{k_T}(T))}^2\Big),
	\]
with $s_T:={\min\{p+1,k_T\}-2}$, and the constant $C(A,\kappa,\lambda)>0$ is bounded away from $+\infty$ for $0< \alpha\le 1/2$ and all $\kappa,\lambda\ge 0$ and independent of the mesh parameters and of $u$. Moreover, $C(A,\kappa,\lambda)$ is \emph{independent} of the final time $t_f$.
\end{theorem}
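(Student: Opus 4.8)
The plan is to combine the three previously established ingredients into a triangle-inequality argument. First I would write the total error as $e = u - U = \rho + \vartheta$, where $\rho = u - \pi u$ and $\vartheta = \pi u - U$, exactly as set up preceding Lemma~\ref{lem:hypo_proj}. The decomposition is chosen precisely so that the (hypo)elliptic projection $\pi$ defined in Lemma~\ref{lem:hypo_proj} annihilates the projection-error contribution in the bilinear form, i.e.\ $\mathcal{B}(\rho,\vartheta)=0$, which is why the error equation \eqref{eq:error_eq} simplifies and feeds directly into the bound \eqref{eq:bound4theta} for $\vartheta$.

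Next I would invoke \eqref{eq:bound4theta} to control the $\vartheta$-part. This requires bounding its three right-hand-side terms. The term $\int_0^t \norm{\sqrt{B}\nabla\rho_t}{}^2\ud s$ is handled by differentiating the projection in time (so that $\rho_t = u_t - \pi u_t = u_t - (\pi u)_t$, using linearity and the fact that $\pi$ commutes with $\partial_t$) and applying the approximation estimate \eqref{eq:error_bound_en} of Lemma~\ref{lem:hypo_proj} with $u$ replaced by $u_t$; since $\norm{\sqrt{B}\nabla(\cdot)}{}\le \tnorm{\cdot}$ up to the constant $c_B$, this yields the $|u_t|_{L_2(0,t;H^{k_T}(T))}^2$ contribution. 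For the initial data I would estimate $\norm{\vartheta(0)}{}^2 + \norm{\sqrt{A}\nabla\vartheta(0)}{}^2$. Here $\vartheta(0) = \pi u_0 - U(0) = \pi u_0 - \Pi u_0$, so I would insert and subtract $u_0$, bound $\norm{u_0 - \Pi u_0}{}$ and $\norm{u_0 - \pi u_0}{}$ separately, and control the $\sqrt{A}\nabla$-part through $\tnorm{u_0 - \pi u_0}$ via \eqref{eq:error_bound_en} applied to $u_0$, producing the $\max\{1,c_0^{-1}\}|u_0|_{H^{k_T}(T)}^2$ term.

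Finally I would add back the $\rho$-contribution using the triangle inequality $\tnorm{e}\le\tnorm{\rho}+\tnorm{\vartheta}$ (and the analogous splitting for the $\norm{\cdot}{}$ and $\norm{\sqrt{A}\nabla\cdot}{}$ terms at the final time), bounding $\int_0^t\tnorm{\rho}{}^2\ud s$ directly by \eqref{eq:error_bound_en} applied to $u(s,\cdot)$ and integrated in time, which supplies the $|u|_{L_2(0,t;H^{k_T}(T))}^2$ term. Collecting the three groups of terms gives \eqref{eq:apriori_bound} with the stated $\mathcal{E}_T$.

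The essential point I would emphasise is what makes the final-time independence work: \textbf{none} of the constants introduced above — $c_B$, $\max\{1,c_0^{-1}\}$, the approximation constant $C(A,\kappa,\lambda)$, or the Poincar\'e--Friedrichs constant $C_{PF}$ hidden in $c_0$ — carries any factor of the form $e^{ct_f}$. This is the payoff of Lemma~\ref{lem:stab}: the hypocoercive coercivity \eqref{eq:PF}, routed through \eqref{eq:bound4theta}, absorbs the $\frac12\int_0^t(c_0\norm{\vartheta}{}^2+\norm{\sqrt{A}\nabla\vartheta}{}^2)\ud s$ term on the \emph{left} rather than forcing a Gr\"onwall exponential. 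The main obstacle, and the step I would take most care over, is precisely verifying that the Gr\"onwall-free absorption in the passage from \eqref{eq:bound4theta2} to \eqref{eq:bound4theta} is legitimate: one must check that the full coercivity norm $\tnorm{\vartheta}$ on the left dominates both $c_0\norm{\vartheta}{}^2$ and $\norm{\sqrt{A}\nabla\vartheta}{}^2$ with the factor $\tfrac12$ to spare, so that the subtracted quantity is genuinely controlled and no $t_f$-dependent multiplier survives. Once this is confirmed, the final-time independence of $C(A,\kappa,\lambda)$ follows merely from inspecting that every constant depends only on $A$, $\kappa$, $\lambda$, the mesh regularity, and $C_{PF}(\Omega)$.
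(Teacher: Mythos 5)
Your proposal is correct and follows essentially the same route as the paper's own proof: the splitting $e=\rho+\vartheta$ with the (hypo)elliptic projection, the bound \eqref{eq:bound4theta} for $\vartheta$, the approximation estimate \eqref{eq:error_bound_en} applied to $u$, $u_t$ and $u_0$ (together with $L_2$-projection bounds for the $u_0-\Pi u_0$ part of $\vartheta(0)$), and a final triangle inequality. The only detail worth adding is that the $\norm{\sqrt{B}\nabla(u_0-\Pi u_0)}{}$ contribution requires $hp$-approximation bounds for the orthogonal $L_2$-projection $\Pi$ in the $H^1$-seminorm, which the paper handles by citation.
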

\begin{proof}
 Working as for \eqref{eq:PF}, we have
 \[
 \begin{aligned}
 \norm{\vartheta(0)}{}^2+\norm{\sqrt{A}\nabla\vartheta(0)}{}^2\le&\  \max\{1,c_0^{-1}\} \norm{\sqrt{B}\nabla\vartheta(0)}{}^2 \\
 \le &\ 2 \max\{1,c_0^{-1}\}\big( \norm{\sqrt{B}\nabla\rho(0)}{}^2 +\norm{\sqrt{B}\nabla(u_0-\Pi u_0)}{}^2 \big).
 \end{aligned}
 \]
 Similarly, we also have
 \[
 \begin{aligned}
 \norm{\rho(0)}{}^2+\norm{\sqrt{A}\nabla\rho(0)}{}^2\le&\  \max\{1,c_0^{-1}\} \norm{\sqrt{B}\nabla\rho(0)}{}^2.
 \end{aligned}
 \]
 The last two estimates can be further bounded from above by \eqref{eq:error_bound_en} and $hp$-approximation bounds for the orthogonal $L_2$-projection \cite{li_shen,chernov}, respectively.
 Also, from \eqref{eq:error_bound_en}, we have
 \begin{equation}\label{eq:rho_t_estimate}
\int_0^t\big(\norm{\sqrt{B}\nabla\rho_t}{}^2+\tnorm{\rho}{}^2\big)\ud s\le C\sum_{T\in\mathcal{T}}\frac{h_T^{2s_T}x_T}{p^{2(k_T-3)}}\big(|u_t|_{L_2(0,t;H^{k_T}(T))}^2+|u|_{L_2(0,t;H^{k_T}(T))}^2\big).
 \end{equation}
Combining the above bounds with the triangle inequality recalling that $u-u_h=\rho+\vartheta$, the result already follows.
\end{proof}
The crucial property of the above error estimate is the \emph{independence} of the respective constant $C(A,\kappa,\lambda)$ from the final time $t_f$; this is a direct consequence of the hypocoercivity-compatible discretisation \eqref{eq:FEM}.

Assumption $u_0\in H^1_{-,0}(\Omega)$ is a natural compatibility condition in this context. Since a typical setting in kinetic simulation concerns initial profiles with compact support within a computational domain $\Omega$, $u_0\in H^1_{-,0}(\Omega)$ is of significant practical relevance also. Nonetheless, we shall now discuss the dependence of the error committed by the numerical method as $t_f\to\infty$ and we shall see that the effect of the initial condition error $u_0-\Pi u_0$ diminishes exponentially as $t_f\to\infty$. Thus, possibly incompatible initial conditions $u_0\in H^1(\Omega)\backslash H^1_{-,0}(\Omega)$ will have an exponentially diminishing effect in the accuracy of the method with respect to $t_f$.

\begin{theorem}\label{thm:error_analysis_two} \change{Assume that $u, u_t\in H^1_{-,0}(\Omega)\cap H^{k_T}(T)$ for $k_T\ge 3$, $T\in\mathcal{T}$  for almost all $t\in(0,t_f]$, with $u$ being the exact solution to   \eqref{eq:kolmogorov-bounded}, \eqref{eq:additional_bc_gen}, and $u_0\in H^1(\Omega)$.}
	Under the hypotheses of Lemmata \ref{lem:stab} and \ref{lem:hypo_proj} and of Theorem \ref{thm:error_analysis_one}, the error $e:=u-U$ of the finite element method \eqref{eq:FEM} satisfies the bound:
	\begin{equation}\label{eq:apriori_bound_hypoco}
	\begin{aligned}
	\norm{e(t_f)}{}^2+\norm{\sqrt{A}\nabla e(t_f)}{}^2\le  &\ {\rm e}^{-\min\{1,c_0\}t_f} \big(\|\pi u_0-\Pi u_0\|^2+\|\sqrt{A}\nabla (\pi u_0-\Pi u_0)\|^2 \big)\\
	&+\frac{C(A,\kappa,\lambda)}{\min\{1,c_0\}}\sum_{T\in\mathcal{T}}	\frac{h_T^{2s_T}x_T}{p^{2(k_T-3)}}\mathcal{H}_T(t,u)
	,
	\end{aligned}
	\end{equation}
	where
	\[
\mathcal{H}_T(t,u):=|u(t_f)|_{H^{k_T}(T)}^2+\int_0^ {t_f}{\rm e}^{-\min\{1,c_0\}(t_f-t)}  |u_t|_{H^{k_T}(T)}^2 \ud t,
	\]
for $s_T:={\min\{p+1,k_T\}-2}$, with $C(A,\kappa,\lambda)$ \emph{independent} of $t_f$, of $u$, and of the mesh parameters. 
\end{theorem}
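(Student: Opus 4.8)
The plan is to re-run the energy argument that produced \eqref{eq:bound4theta}, but to exploit the spectral-gap inequality \eqref{eq:PF} \emph{before} integrating in time, so that a Gr\"onwall argument with an exponential integrating factor replaces the plain integration; this is precisely the mechanism that turns \eqref{eq:energy_classical_tilde_two} into \eqref{eq:basic_energy_estimate_nondeg} at the PDE level, and \eqref{eq:stab} into Theorem \ref{thm:hypoco} at the discrete level. I keep the decomposition $e=\rho+\vartheta$ with $\rho=u-\pi u$ and $\vartheta=\pi u-U$, and work from the error equation \eqref{eq:error_eq}, in which $\mathcal{B}(\rho,\vartheta)$ has already been annihilated by the defining property of the projection $\pi$ of Lemma \ref{lem:hypo_proj}. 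Crucially, since $\vartheta(0)=\pi u(0)-U(0)=\pi u_0-\Pi u_0$, the initial value of the energy $E(t):=\norm{\vartheta(t)}{}^2+\norm{\sqrt{A}\nabla\vartheta(t)}{}^2$ is exactly the quantity multiplying ${\rm e}^{-\min\{1,c_0\}t_f}$ in the claimed bound.

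First I convert \eqref{eq:error_eq} into a differential inequality for $E$. Lemma \ref{lem:coercivity} together with \eqref{eq:PF} gives the lower bound $\mathcal{B}(\vartheta,\vartheta)\ge\frac12\tnorm{\vartheta}^2\ge\frac12\norm{\sqrt{B}\nabla\vartheta}{}^2\ge\frac12\big(c_0\norm{\vartheta}{}^2+\norm{\sqrt{A}\nabla\vartheta}{}^2\big)\ge\frac12\min\{1,c_0\}E$, so the coercive term controls $E$ at the advertised rate. The right-hand side of \eqref{eq:error_eq} is estimated by Cauchy-Schwarz and Young's inequality as $|(\rho_t,\vartheta)|+|(\nabla\rho_t,A\nabla\vartheta)|$ $\le$ (a fraction of $\mathcal{B}(\vartheta,\vartheta)$) $+\,C\max\{1,c_0^{-1}\}\big(\norm{\rho_t}{}^2+\norm{\sqrt{A}\nabla\rho_t}{}^2\big)$, the first part being absorbed into the coercive term. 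This yields $\frac{\ud}{\ud t}E+\min\{1,c_0\}E\le C\max\{1,c_0^{-1}\}\big(\norm{\rho_t}{}^2+\norm{\sqrt{A}\nabla\rho_t}{}^2\big)$; the only care needed here is to retain enough of $\mathcal{B}(\vartheta,\vartheta)$ after the Young splitting to preserve the decay rate $\min\{1,c_0\}$, exactly as in the bookkeeping leading from \eqref{eq:energy_classical_tilde_two} to \eqref{eq:basic_energy_estimate_nondeg}.

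Multiplying by the integrating factor ${\rm e}^{\min\{1,c_0\}t}$ and integrating over $(0,t_f)$ then gives
\[
E(t_f)\le {\rm e}^{-\min\{1,c_0\}t_f}E(0)+C\max\{1,c_0^{-1}\}\int_0^{t_f}{\rm e}^{-\min\{1,c_0\}(t_f-t)}\big(\norm{\rho_t}{}^2+\norm{\sqrt{A}\nabla\rho_t}{}^2\big)\ud t.
\]
The first term is already the initial-data contribution of \eqref{eq:apriori_bound_hypoco}. For the integral I use that $\pi$ is linear and time-independent, hence $\rho_t=u_t-\pi u_t$ is the projection error of $u_t$; I bound $\norm{\rho_t}{}^2+\norm{\sqrt{A}\nabla\rho_t}{}^2\le\max\{1,c_0^{-1}\}\norm{\sqrt{B}\nabla\rho_t}{}^2\le\max\{1,c_0^{-1}\}\tnorm{u_t-\pi u_t}^2$ and apply the approximation estimate \eqref{eq:error_bound_en} elementwise to $u_t$. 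Noting the identity $\max\{1,c_0^{-1}\}=\min\{1,c_0\}^{-1}$, this produces precisely the prefactor $\min\{1,c_0\}^{-1}$ of the theorem and the weighted time-integral $\int_0^{t_f}{\rm e}^{-\min\{1,c_0\}(t_f-t)}|u_t|_{H^{k_T}(T)}^2\ud t$ appearing in $\mathcal{H}_T$.

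It remains to recover $e$ from $\vartheta$. By the triangle inequality $\norm{e(t_f)}{}^2+\norm{\sqrt{A}\nabla e(t_f)}{}^2\le 2E(t_f)+2\big(\norm{\rho(t_f)}{}^2+\norm{\sqrt{A}\nabla\rho(t_f)}{}^2\big)$, and the stationary error $\rho(t_f)$ is controlled directly by \eqref{eq:error_bound_en} at time $t_f$, again with the factor $\max\{1,c_0^{-1}\}=\min\{1,c_0\}^{-1}$; this is exactly the \emph{non-decaying} term $|u(t_f)|_{H^{k_T}(T)}^2$ inside $\mathcal{H}_T$, which cannot and need not carry an exponential factor. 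Assembling the three contributions yields \eqref{eq:apriori_bound_hypoco}. The expected main obstacle is not conceptual but lies in the rate bookkeeping: one must check that the factor $\min\{1,c_0\}$ genuinely survives the Young absorption of the $(\rho_t,\vartheta)$ and $(\nabla\rho_t,A\nabla\vartheta)$ cross-terms and the subsequent integrating-factor step, so that the initial-data term inherits the full kernel ${\rm e}^{-\min\{1,c_0\}t_f}$; and in verifying that the projection estimate \eqref{eq:error_bound_en}, derived for the stationary problem, is legitimately applied to $u_t$ under the assumed $u\in H^1(0,t_f;H^1_{-,0}(\Omega)\cap H^{5/2+\varepsilon}(\Omega,\mathcal{T}))$ regularity.
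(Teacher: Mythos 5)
Your proposal is correct and coincides essentially step-for-step with the paper's own proof: the paper likewise starts from the error equation \eqref{eq:error_eq} with $\mathcal{B}(\rho,\vartheta)=0$ by the (hypo)elliptic projection, combines Lemma \ref{lem:coercivity} with \eqref{eq:PF} to obtain the differential inequality $\frac{\ud}{\ud t}\big(\|\vartheta\|^2+\|\sqrt{A}\nabla\vartheta\|^2\big)+\min\{1,c_0\}\big(\|\vartheta\|^2+\|\sqrt{A}\nabla\vartheta\|^2\big)\le c_0^{-1}\norm{\rho_t}{}^2+\norm{\sqrt{A}\nabla\rho_t}{}^2$, applies Gr\"onwall's lemma, and then controls both $\norm{\rho_t}{}^2+\norm{\sqrt{A}\nabla\rho_t}{}^2$ and $\norm{\rho(t_f)}{}^2+\norm{\sqrt{A}\nabla\rho(t_f)}{}^2$ through $\max\{1,c_0^{-1}\}\norm{\sqrt{B}\nabla(\cdot)}{}^2$ and the projection estimate \eqref{eq:error_bound_en} applied to $u_t$ and to $u(t_f)$. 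The only differences are presentational — your explicit integrating factor, the identity $\max\{1,c_0^{-1}\}=\min\{1,c_0\}^{-1}$, and your flagging of the Young-absorption bookkeeping needed to retain the rate $\min\{1,c_0\}$ — all of which are implicit (and equally loose, up to a harmless constant) in the paper's argument.
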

\begin{proof}
Starting from \eqref{eq:error_eq}, we use Lemmata \ref{lem:coercivity} and \ref{lem:hypo_proj} and, subsequently, apply \eqref{eq:PF} to arrive at
\[
\begin{aligned}
&\frac{\ud }{\ud t}\big(\|\vartheta(t)\|^2+\|\sqrt{A}\nabla \vartheta(t)\|^2\big)+\min\{1,c_0\}\big(\|\vartheta\|^2+\|\sqrt{A}\nabla \vartheta\|^2\big)
\le c_0^{-1}\norm{\rho_t}{}^2+\norm{\sqrt{A}\nabla \rho_t}{}^2,
\end{aligned}
\]
upon multiplication by $2$. Gr\"onwall's Lemma, thus, implies
\begin{equation}\label{eq:decay_error}
\begin{aligned}
\|\vartheta(t_f)\|^2+\|\sqrt{A}\nabla \vartheta(t_f)\|^2 \le &\ {\rm e}^{-\min\{1,c_0\}t_f} \big(\|\vartheta(0)\|^2+\|\sqrt{A}\nabla \vartheta(0)\|^2 \big)\\
&+\int_0^ {t_f}{\rm e}^{-\min\{1,c_0\}(t_f-t)}\big(c_0^{-1}\norm{\rho_t}{}^2+\norm{\sqrt{A}\nabla \rho_t}{}^2\big)\ud t.
\end{aligned}
\end{equation}
As before, we also have
\[
\begin{aligned}
 \norm{\rho_t}{}^2+\norm{\sqrt{A}\nabla\rho_t}{}^2
 \le&\ \max\{1,c_0^{-1}\} \norm{\sqrt{B}\nabla\rho_t}{}^2
 \le \frac{C(A,\kappa,\lambda)}{\min\{1,c_0\}} \sum_{T\in\mathcal{T}}	\frac{h_T^{2s_T}x_T}{p^{2(k_T-3)}}|u_t|_{H^{k_T}(T)}^2.
\end{aligned}
\]
 The last bound combined with \eqref{eq:decay_error} and the completely analogous estimate 
\[
\|\rho(t_f)\|^2+\|\sqrt{A}\nabla \rho(t_f)\|^2\le \max\{1,c_0^{-1}\} \norm{\sqrt{B}\nabla\rho(t_f)}{}^2,
\]
already imply the result.
\end{proof}
	
Therefore, \eqref{eq:FEM} admits completely analogous long time properties compared to the PDE problem \eqref{eq:kolmogorov-bounded}, \eqref{eq:additional_bc_gen}.

\begin{remark}
The first term on the right-hand side of \eqref{eq:decay_error} vanishes upon altering the finite element method \eqref{eq:FEM} at $t=0$ from $U(0)=\Pi u_0$ to  $U(0)=\pi u_0$. In practical terms, this results in a computational overhead of a stiffness matrix solve as opposed to a mass matrix one. 
\end{remark}

The a priori error estimate derived in Theorem \ref{thm:error_analysis_one} is optimal with respect to mesh-size $h$, upon observing that the bilinear form $\change{\mathcal{B}}$ is the weak form of a formally 4th order differential operator. At the same time, the highest order terms involving $A$ of $\change{\mathcal{B}}(w,v)$ play the role of ``stabilisation'' in this non-standard Galerkin context, resulting in increase of the spatial operator order from second to fourth. The estimate \eqref{eq:apriori_bound} is slightly suboptimal with respect to the polynomial degree $p$, as is typical in error analyses of $hp$-version interior penalty procedures, such as \eqref{eq:FEM}, involving inverse estimates  \cite{ghm}. Correspondingly, the a priori error bound in Theorem \ref{thm:error_analysis_two} is formally suboptimal with respect to the mesh size $h$ by one order. \change{For instance, for $p=2$ and $k_T=3$, $T\in\mathcal{T}$, we have first order convergence with respect to the mesh-size $h$.} This is because the errors are measured in weaker norms tha\change{n} the ones appearing on the right-hand side of \eqref{eq:apriori_bound_hypoco}. We view this as a reasonable price to pay for the long time robustness of \change{the} method when $t_f\to \infty$. Indeed, in many practical scenarios, such as long time computations of decay to equilibrium distributions, the favourable exponential dependence with respect to $t_f$ may be preferable to a slight suboptimality in the rate of convergence with respect to the mesh size $h$. Moreover, given the typically high smoothness of the exact solution $u$, the use of high order finite element spaces may diminish further the practical significance of this slight $h$-suboptimality.  

Nevertheless, to address the aforementioned slight $h$-suboptimality in Theorem \ref{thm:error_analysis_two}, one may be tempted to consider scaling of the stabilisation terms involving $A$ by appropriate powers of the mesh-size $h$. This will reduce the stiffness matrix scaling to one of a second order operator; this is a standard practice in classical Galerkin contexts, e.g., for streamline upwinded Petrov-Galerkin methods for convection-dominated problems. In the present context, however, such scaling would introduce new challenges, most important of which is that the spectral gap constant $c_0$ would be proportional to a negative power of $h$. Although such dependence may not turn out to be catastrophic in certain scenarios, e.g., when $u_t$ decays very fast (cf., \eqref{eq:apriori_bound_hypoco} in Theorem \ref{thm:error_analysis_two}), this is a challenging question that requires further research.

Another possibility to retrieve optimality in the rate of convergence for the method \eqref{eq:FEM} in weaker norms is the incorporation of Aubin-Nitsche type duality techniques. \change{Using the} $h$-optimal error control of the (hypo)elliptic projection error \change{$\tnorm{u-\pi u}$, along with regularity estimates, it is conceivable that the order of convergence with respect to $h$ and $p$ in Theorem \ref{thm:error_analysis_two}  can be improved, since the $H^1$-seminorm is weaker than $\tnorm{\cdot}$}. Although this is classical in parabolic problems, it appears to be a challenge in the present context of hypocoercive operators. This is due to the non-closedness of the hypocoercivity property with respect to the adjoint operation in the present variational context \eqref{eq:weak_hypo}. This constitutes an interesting direction for future research.

\section{Numerical experiments}\label{sec:num}
\change{We now present some basic numerical experiments aiming to verify the convergence rates predicted by the theory. For the (not studied theoretically) time discretization, we have implemented a discontinuous Galerkin timestepping method of degree $p-2$ with $p$  being the local polynomial degree of the spatial discretization; this choice formally balances the expected rates of convergence, while maintaining a practical computational cost. For the experiments below we have made the following parameter choices: $t_f=0.5$, $\Omega=(0,1)^2$, $\kappa=\lambda=0$, $\alpha=0.35$, $\beta=\alpha^2$ , $\gamma=\alpha^3$, and $C_\tau=10$. We note that altering $\alpha$  within the permitted range does not appear to have essential impact on the errors observed.

\subsection{Example 1} We first consider the problem \eqref{eq:kolmogorov-bounded} and $u_0,\ f$ so that the exact solution is given by the smooth time-independent periodic function $u(t,x,y):=\sin^2(\pi x)\sin^2(\pi y)$. The purpose of this example is to observe the convergence of the spatial discretization.

\begin{figure}[t]
	\includegraphics[scale=0.287]{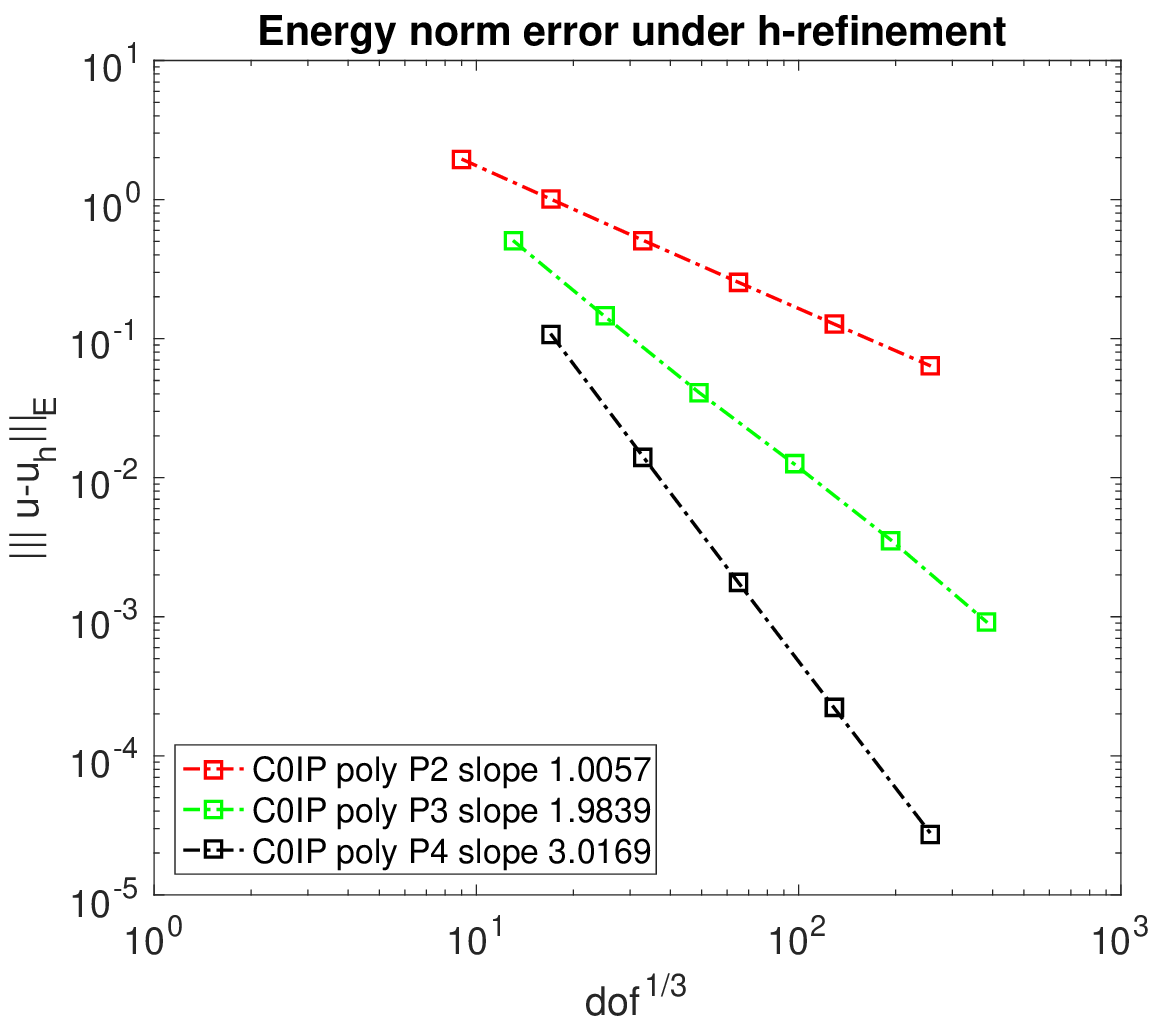}	\includegraphics[scale=0.287]{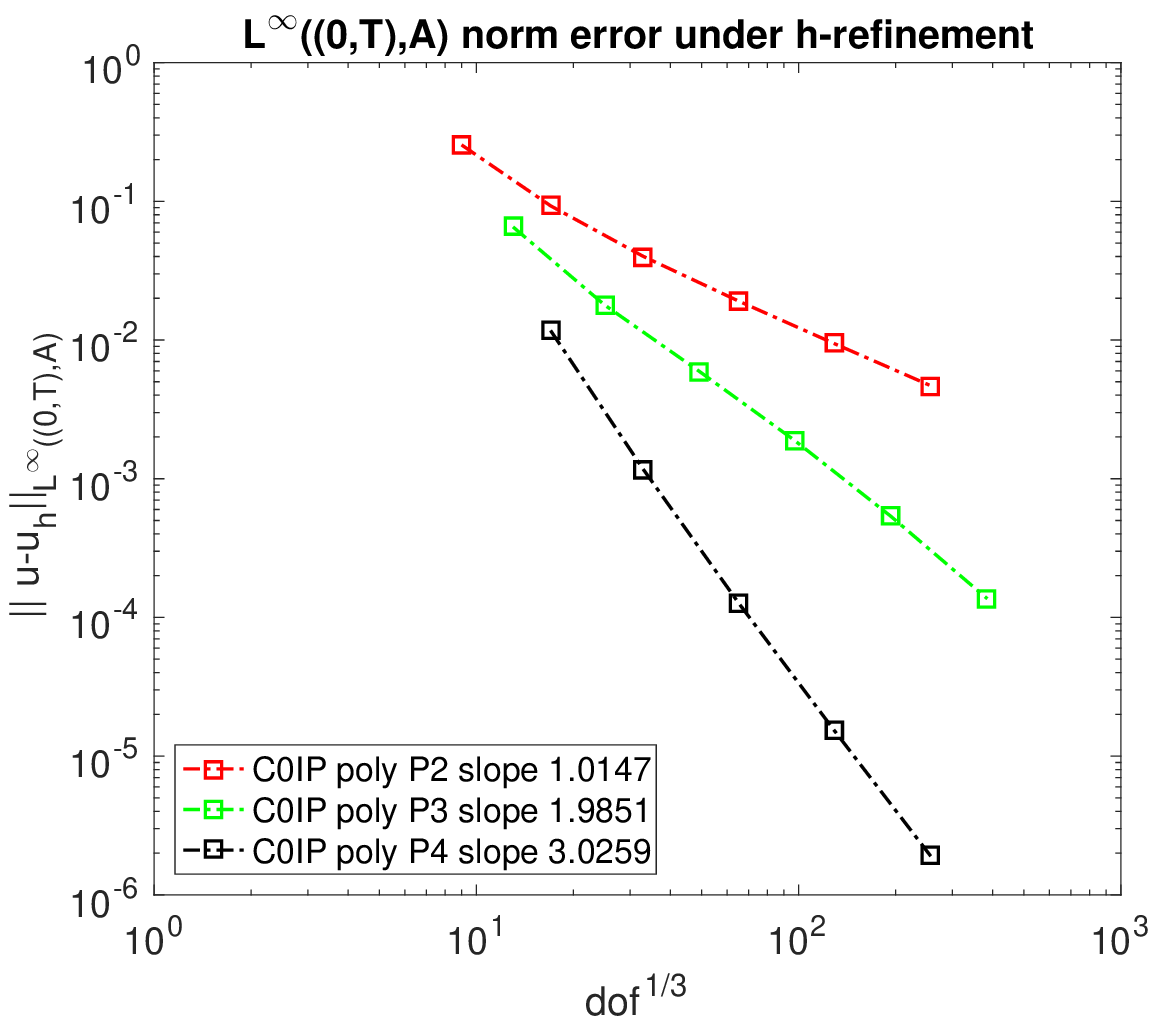}
	\includegraphics[scale=0.287]{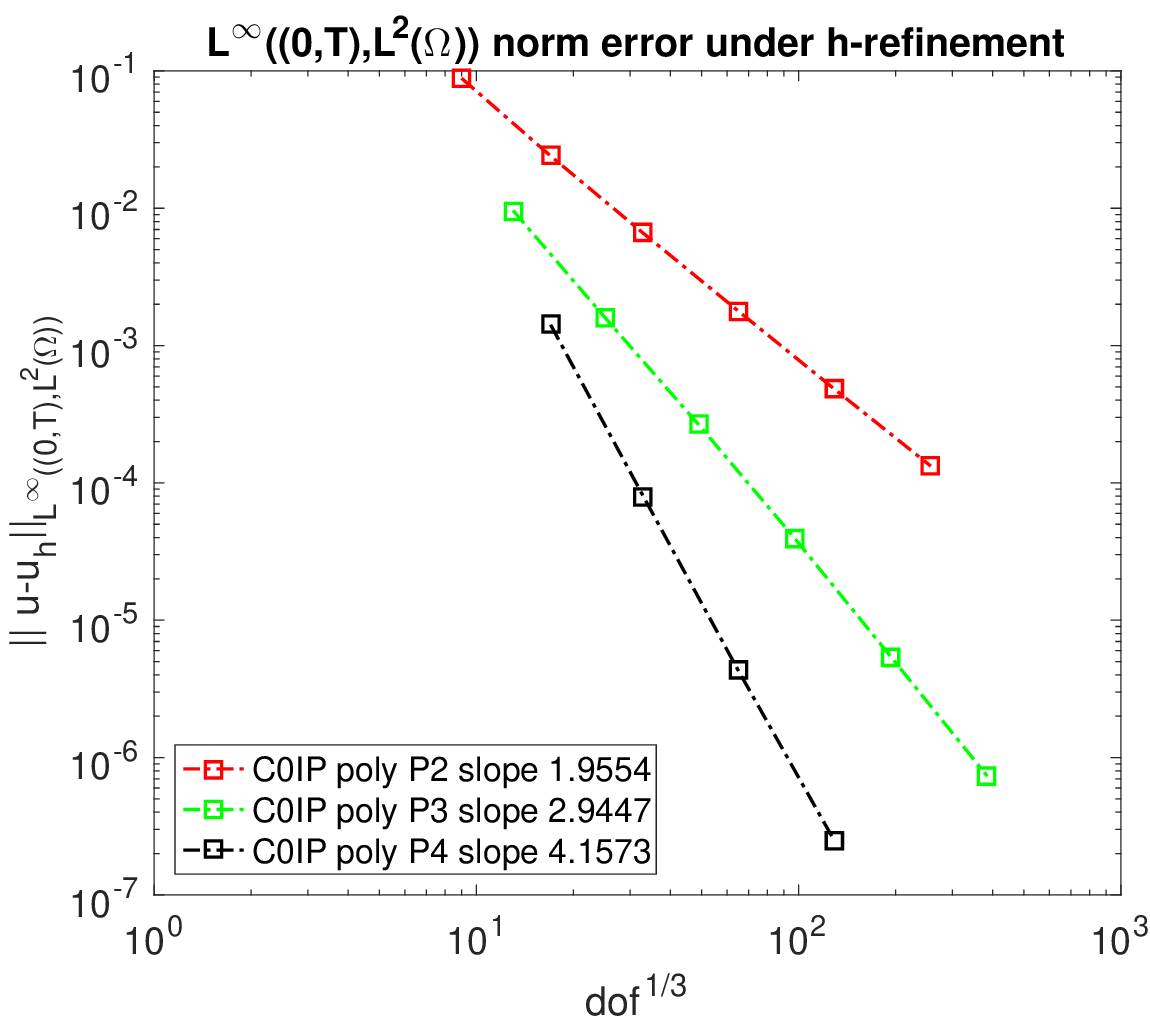}
	\caption{Example 1. Convergence history under $h$-refinement for the errors $\tnorm{e}{}$ (left), $\norm{\sqrt{A}\nabla e}{{L_\infty(0,t_f;L_2(\Omega))}}$ (centre), $\norm{e}{{L_\infty(0,t_f;L_2(\Omega))}}$ (right), for $p=2,3,4$, against the square root of total space-time degrees of freedom (Total DoF)$^{1/2}$.}
	\label{fig:example1}
\end{figure}
In Figure \ref{fig:example1}, the convergence rates under $h$-refinement are presented for the errors $\tnorm{e}{}$ (left), $\norm{\sqrt{A}\nabla e}{L_\infty(0,t_f;L_2(\Omega))}$ (centre), $\norm{e}{L_\infty(0,t_f;L_2(\Omega))}$ (right), and for local polynomial degrees $p=2,3,4$ against the square root of total space-time degrees of freedom (Total DoF)$^{1/2}$.  The space-time subdivision is constructed by tensorizing a sequence of spatial triangular meshes of $32$, $128$, $512$,  $2,\!048$,  $8,\!192$, and $32,\!768$  elements with the whole time interval $[0,t_f]$; there is no error in time, so one time step is sufficient. For the errors $\tnorm{e}{}$ and $\norm{\sqrt{A}\nabla e}{}$, we observe convergence rates $O(h^{p-1})$ as expected by the theory, while the error $\norm{e}{}$ converges at the higher rate 
$O(h^{p})$ although the theory currently only predicts   $O(h^{p-1})$. 

\subsection{Example 2}
 We now consider the problem \eqref{eq:kolmogorov-bounded} and $u_0,\ f$ so that the exact solution is given by the smooth function $u(t,x,y):=(2-t)^{-1}\exp(- (x - 1/2)^2 - (y - 1/2)^2)\sin(\pi (x-t))\sin^2(\pi y)$. Note that \eqref{eq:additional_bc_gen} is satisfied. The purpose of this experiment is to observe the convergence in the practical setting of the discontinuous Galerkin timestepping of degree $p-2$ in conjunction with $p$-th polynomial degree spatial discretization \eqref{eq:FEM}.

\begin{figure}[t]
	\includegraphics[scale=0.287]{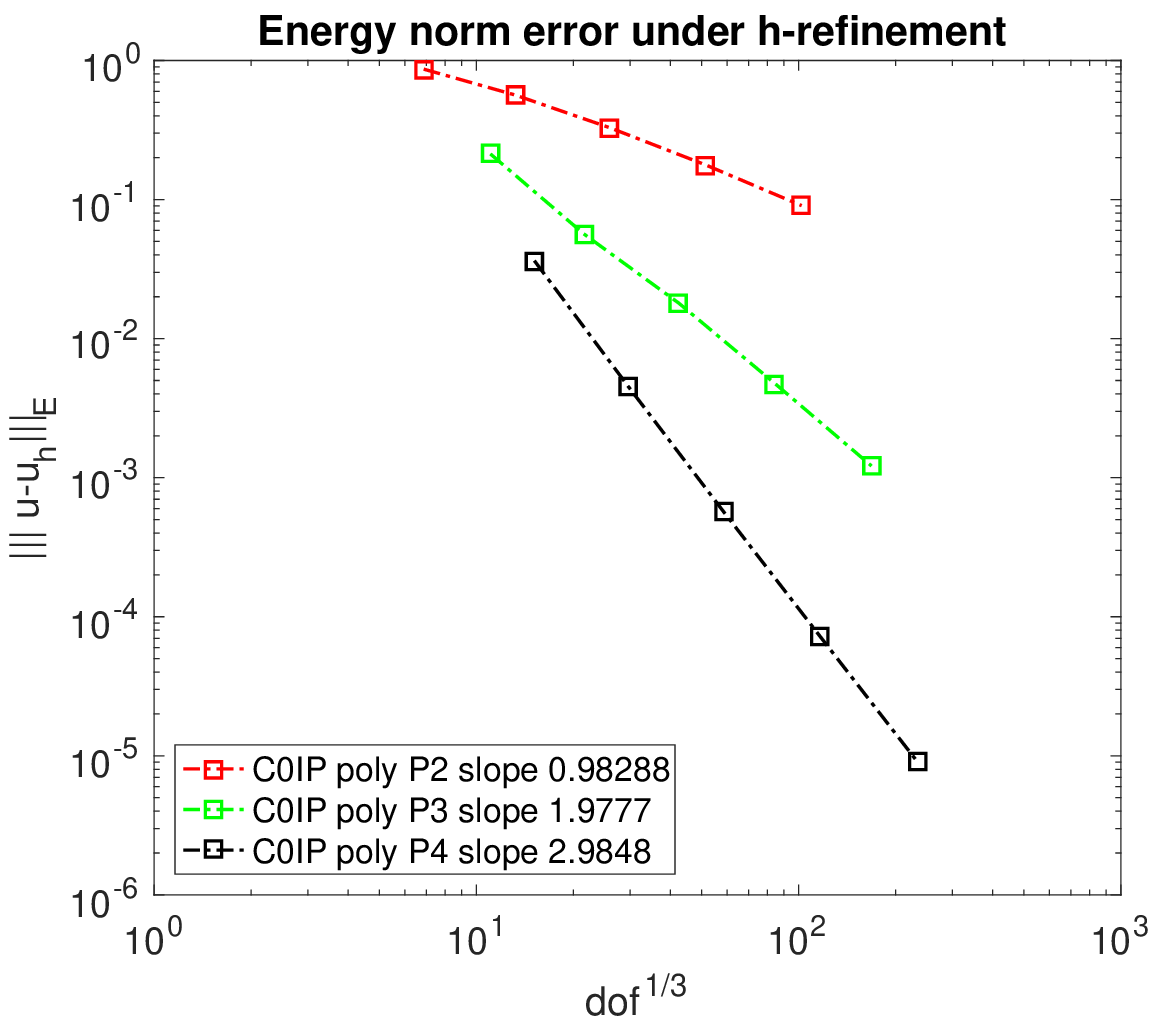}	\includegraphics[scale=0.287]{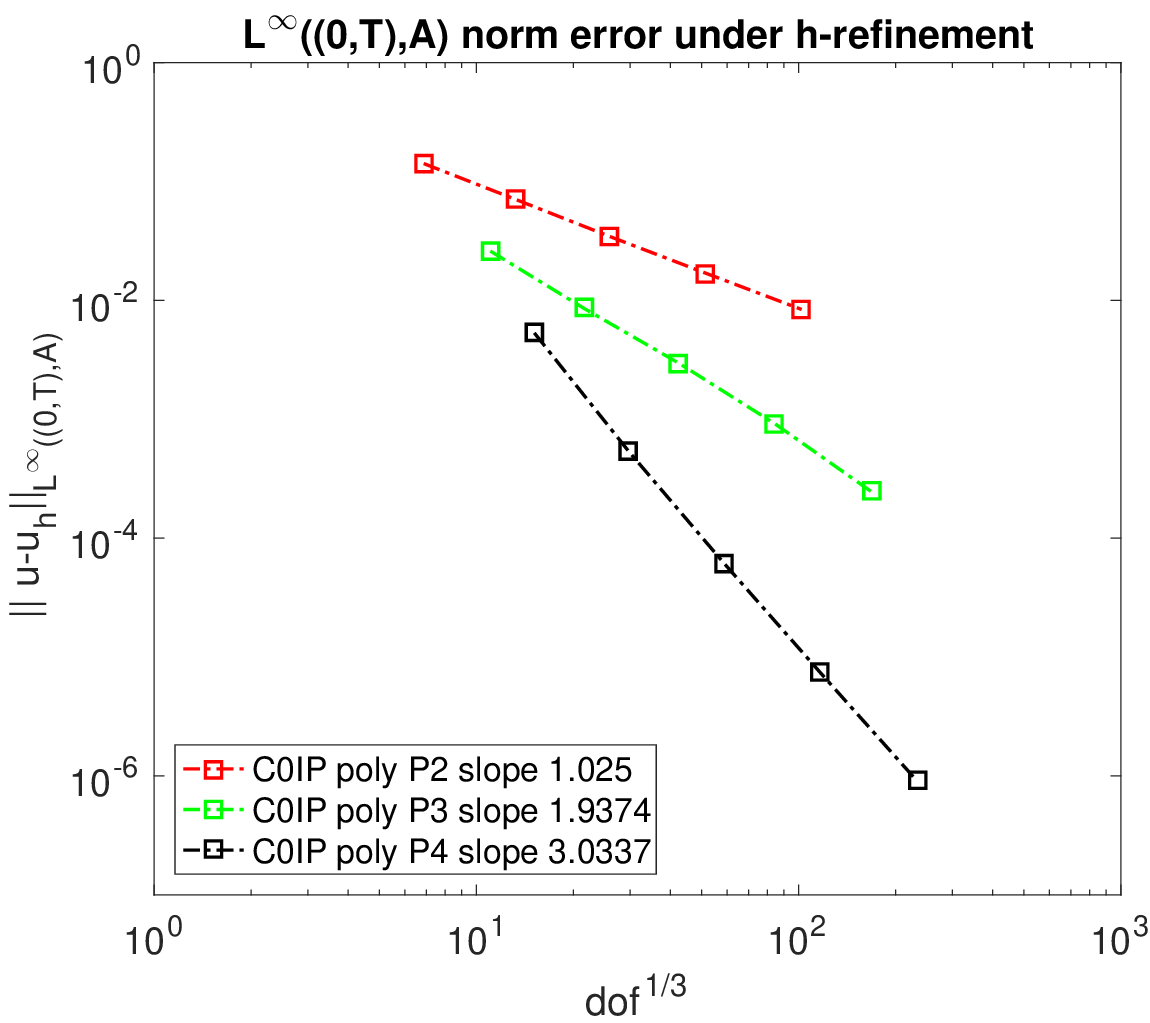}
	\includegraphics[scale=0.287]{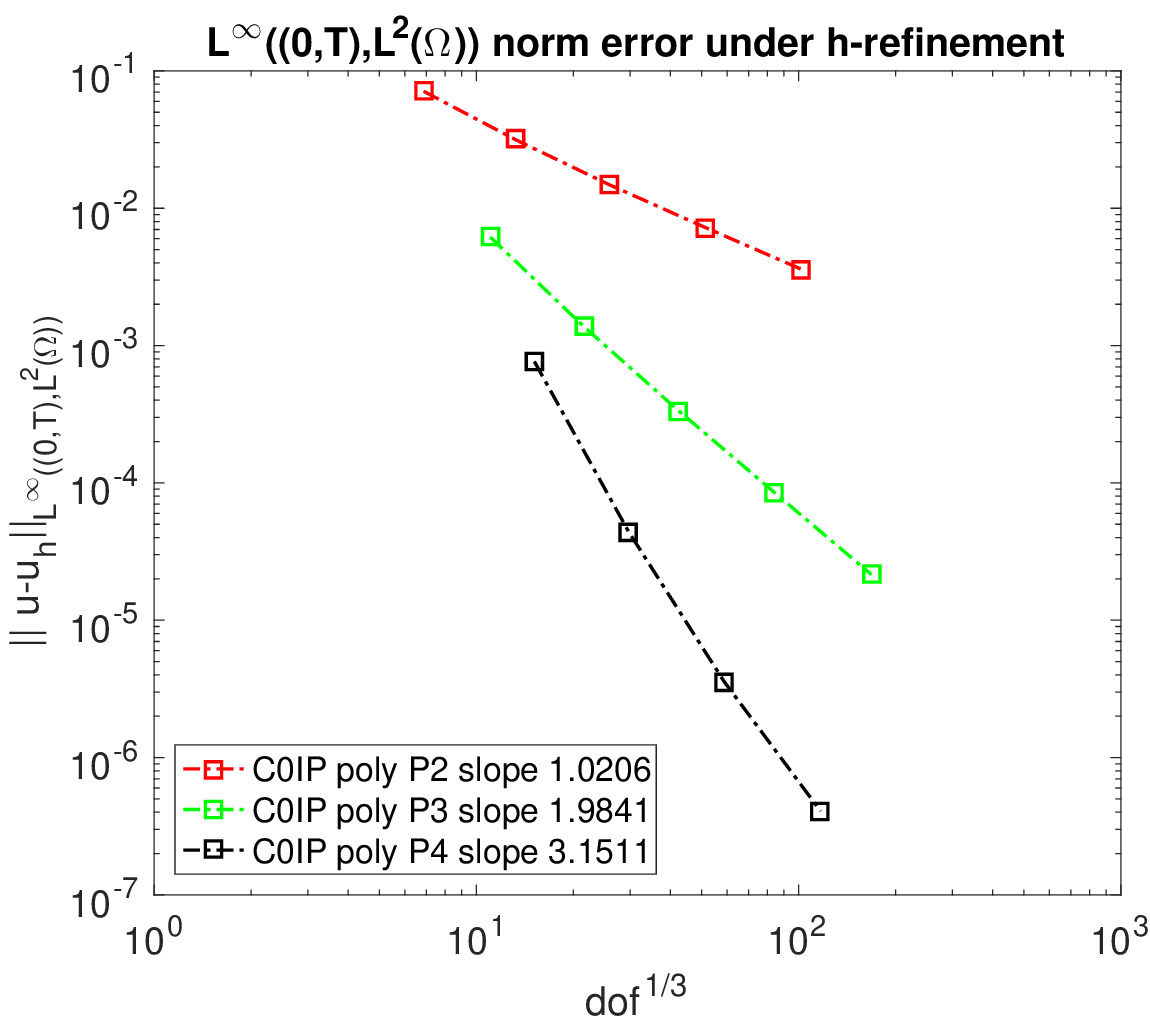}
	\caption{Example 2. Convergence history under $h$-refinement for the errors $\tnorm{e}{}$ (left), $\norm{\sqrt{A}\nabla e}{{L_\infty(0,t_f;L_2(\Omega))}}$ (centre), $\norm{e}{{L_\infty(0,t_f;L_2(\Omega))}}$ (right), for $p=2,3,4$, against the cubic root of total space-time degrees of freedom (Total DoF)$^{1/3}$.}
	\label{fig:example2}
\end{figure}

The space-time subdivision is constructed by tensorizing a sequence of spatial triangular meshes of $32$, $128$, $512$,  $2,\!048$,  and $8,\!192$  elements with respective subdivisions of $[0,t_f]$ into $4$, $8$, $16$, $32$, and $64$ time steps; notice that this choice corresponds to balanced spatial and temporal discretization parameters. In Figure \ref{fig:example2}, the convergence rates under $h$-refinement are presented for the errors $\tnorm{e}{}$ (left), $\norm{\sqrt{A}\nabla e}{L_\infty(0,t_f;L_2(\Omega))}$ (centre), $\norm{e}{L_\infty(0,t_f;L_2(\Omega))}$ (right), and for local polynomial degrees $p=2,3,4$ against the cubic root of total space-time degrees of freedom (Total DoF)$^{1/3}$.  We observe convergence rates $O(h^{p-1})$ for all errors. We expect that the reduced order of convergence for the error $\norm{e}{L_\infty(0,t_f;L_2(\Omega))}$ compared to Example 1 is due to the lower order temporal discretization.

We finally note that the behaviour of the proposed method in practical scenarios with regard to the dependence on the final time $t_f$ is a rather subtle and interesting topic in its own right and it will be discussed in detail elsewhere \cite{dg}.
}

\section*{Acknowledgements}
I \change{am sincerely grateful to} Zhaonan (Peter) Dong (Cardiff, UK) for helpful comments on an earlier version of this work \change{and for providing a computer code used for the numerical experiments}. The financial support of The Leverhulme Trust (research project grant no. RPG-2015-306) is also gratefully acknowledged.\change{ Also, this research work was supported by the Hellenic Foundation for Research and Innovation (H.F.R.I.) under the ``First Call for H.F.R.I. Research Projects to support Faculty members and Researchers and the procurement of high-cost research equipment grant" (Project Number: 3270).}

\end{document}